\newtheorem{thm}{Theorem}[section]
\newtheorem{lem}{Lemma}[section]
\newtheorem{defi}{Definition}[section]
\newtheorem{ex}{Example}[section]
\newtheorem{rem}{Remark}[section]
\newtheorem*{ques}{\it{Question}}
\begin{document}

\title{Shadowing, sensitivity and entropy points}
\author{Noriaki Kawaguchi}
\subjclass[2020]{37B40, 37B65, 37D45}
\keywords{shadowing, sensitivity, entropy, chain continuity, chain component}
\address{Department of Mathematical and Computing Science, School of Computing, Institute of Science Tokyo, 2-12-1 Ookayama, Meguro-ku, Tokyo 152-8552, Japan}
\email{gknoriaki@gmail.com}

\begin{abstract}
For continuous self-maps of compact metric spaces, we explore the relationship among the shadowable points, sensitive points, and entropy points. Specifically, we show that (1) if the set of shadowable points is dense in the phase space, then a point located in the interior of the set of sensitive points is an entropy point; and (2) if the topological entropy is zero, then the denseness of the set of shadowable points is equivalent to almost chain continuity. In addition, we present a counter-example to a question raised by Ye and Zhang regarding entropy points.
\end{abstract}

\maketitle

\markboth{NORIAKI KAWAGUCHI}{Shadowing, sensitivity and entropy points}

\section{Introduction}

{\em Shadowing} is an important concept in the theory of dynamical systems. It was initially introduced in the context of hyperbolic differentiable dynamics \cite{An,B} and generally refers to a situation in which coarse orbits, or {\em pseudo-orbits}, are approximated by true orbits. For background on the shadowing theory, we refer the reader to the monograph \cite{AH}. In \cite{Mor}, by localizing global shadowing into pointwise shadowings, Morales introduced the notion of {\em shadowable points}. In \cite{K4}, a shadowable point that is also an entropy point of a certain type is characterized by the structure of pseudo-orbits. In this paper, we discuss the relationship among the shadowable points, sensitive points, and entropy points. For instance, one of the main results is that if the set of shadowable points is dense in the phase space, then a point located in the interior of the set of sensitive points is an entropy point (Theorem 1.2). Another result is that if the topological entropy is zero, then the denseness of the set of shadowable points is equivalent to almost chain continuity (Theorem 1.3). In contrast to \cite{K4}, a particular focus of this paper is on the terminal chain components. We also present a counter-example to a question raised by Ye and Zhang \cite{YZ} regarding entropy points.

We begin by defining shadowable points. Throughout, $X$ denotes a compact metric space endowed with a metric $d$.

\begin{defi}
\normalfont
Let $f\colon X\to X$ be a continuous map and let $\xi=(x_i)_{i\ge0}$ be a sequence of points in $X$. For $\delta>0$, $\xi$ is called a {\em $\delta$-pseudo orbit} of $f$ if $d(f(x_i),x_{i+1})\le\delta$ for all $i\ge0$. For $\epsilon>0$, $\xi$ is said to be {\em $\epsilon$-shadowed} by $x\in X$ if $d(f^i(x),x_i)\leq \epsilon$ for all $i\ge 0$. We say that $x\in X$ is a {\em shadowable point} for $f$ if for any $\epsilon>0$, there is $\delta>0$ such that every $\delta$-pseudo orbit $(x_i)_{i\ge0}$ of $f$ with $x_0=x$ is $\epsilon$-shadowed by some $y\in X$. We denote by $Sh(f)$ the set of shadowable points for $f$.
\end{defi}

For a topological space $Z$, a subset $S$ of $Z$ is called a $G_\delta$-subset of $Z$ if $S$ is a countable intersection of open subsets of $Z$.

\begin{rem}
\normalfont
\begin{itemize}
\item[(1)] Let $f\colon X\to X$ be a continuous map. For any $j,l\ge1$, let $S_{j,l}$ denote the set of $x\in X$ such that there is an open neighborhood $U$ of $x$ for which every $\frac{1}{j}$-pseudo orbit $(x_i)_{i\ge0}$ of $f$ with $x_0\in U$ is $\frac{1}{l}$-shadowed by some $y\in X$. We see that $S_{j,l}$ is an open subset of $X$ for all $j,l\ge1$ and
\[
Sh(f)=\bigcap_{l\ge1}\bigcup_{j\ge1}S_{j,l};
\]
therefore, $Sh(f)$ is a $G_{\delta}$-subset of $X$.
\item[(2)] A continuous map $f\colon X\to X$ is said to have the {\em shadowing property} if for any $\epsilon>0$, there is $\delta>0$ such that every $\delta$-pseudo orbit of $f$ is $\epsilon$-shadowed by some point of $X$, which is equivalent to $X=Sh(f)$ (see Lemma 2.4 of \cite{K1}).
\end{itemize}
\end{rem}

{\em Sensitivity} is a characteristic feature of chaotic dynamical systems. It is an element of some formal definitions of chaos and intuitively means that an arbitrarily small difference in initial conditions can be amplified to be a significant difference in later states (see, e.g., \cite{GW} for an in-depth look at the concept of sensitivity). The formal definition of sensitive points is as follows.

\begin{defi}
\normalfont
Given a continuous map $f\colon X\to X$ and $r>0$, $x\in X$ is called an {\em $r$-sensitive point} for $f$ if for any $\epsilon>0$, there are $y,z\in X$ and $i\ge0$ such that
\[
\max\{d(x,y),d(x,z)\}\le\epsilon
\]
and $d(f^i(y),f^i(z))>r$. We denote by $Sen_r(f)$ the set of $r$-sensitive points for $f$. We also define the set $Sen(f)$ of {\em sensitive points} for $f$ by
\[
Sen(f)=\bigcup_{r>0}Sen_r(f).
\]  
\end{defi}

For a continuous map $f\colon X\to X$, a subset $S$ of $X$ is said to be {\em $f$-invariant} if $f(S)\subset S$.

\begin{rem}
\normalfont
\begin{itemize}
\item[(1)] For any continuous map $f\colon X\to X$ and $r>0$, $Sen_r(f)$ is a closed $f$-invariant subset of $X$.
\item[(2)] A continuous map $f\colon X\to X$ is said to be {\em sensitive} if $X=Sen_r(f)$ for some $r>0$.
\end{itemize}
\end{rem}

Next, we recall the definition of entropy points. Note that the positive topological entropy is another characteristic feature of chaotic dynamical systems. The notion of entropy points is obtained by a concentration of positive topological entropy at a point \cite{YZ}. 

Let $f\colon X\to X$ be a continuous map. For $n\ge1$ and $r>0$, a subset $E$ of $X$ is said to be {\em $(n,r)$-separated} if 
\[
\max_{0\le i\le n-1}d(f^i(x),f^i(y))>r
\]
for all $x,y\in E$ with $x\ne y$. Let $K$ be a subset of $X$. For $n\ge1$ and $r>0$, let $s_n(f,K,r)$ denote the largest cardinality of an $(n,r)$-separated subset of $K$. We define $h(f,K,r)$ and $h(f,K)$ by
\[
h(f,K,r)=\limsup_{n\to\infty}\frac{1}{n}\log{s_n(f,K,r)}
\]
and
\[
h(f,K)=\lim_{r\to0}h(f,K,r).
\]
The topological entropy $h_{\rm top}(f)$ of $f$ is defined by $h_{\rm top}(f)=h(f,X)$.

\begin{defi}
\normalfont
Let $f\colon X\to X$ be a continuous map. For $x\in X$, let $\mathcal{K}(x)$ denote the set of closed neighborhoods of $x$.
\begin{itemize}
\item[(1)] $Ent(f)$ is the set of $x\in X$ such that $h(f,K)>0$ for all $K\in\mathcal{K}(x)$.
\item[(2)] For $r>0$, $Ent_r(f)$ is the set of $x\in X$ such that $h(f,K,r)>0$ for all $K\in\mathcal{K}(x)$.
\item[(3)] For $r,b>0$, $Ent_{r,b}(f)$ is the set of $x\in X$ such that $h(f,K,r)\ge b$ for all $K\in\mathcal{K}(x)$. 
\end{itemize}
\end{defi}

\begin{rem}
\normalfont
The following properties hold
\begin{itemize}
\item $Ent(f)$, $Ent_r(f)$, $r>0$, and $Ent_{r,b}(f)$, $r,b>0$, are closed $f$-invariant subsets of $X$,
\item
\[
Ent(f)\subset Ent_r(f)\subset Ent_{r,b}(f)
\]
for all $r,b>0$,
\item for any closed subset $K$ of $X$ and $r>0$, if $h(f,K,r)>0$, then $K\cap Ent_r(f)\ne\emptyset$,
\item for any closed subset $K$ of $X$ and $r,b>0$, if $h(f,K,r)\ge b$, then $K\cap Ent_{r,b}(f)\ne\emptyset$.
\end{itemize}
\end{rem}

\begin{rem}
\normalfont
Note that $Ent_r(f)\subset Sen_r(f)$ for all $r>0$.
\end{rem}

{\em Chain components}, which appear in the (so-called) fundamental theorem of dynamical systems by Conley, are essential objects for a global understanding of dynamical systems \cite{C}. Let us recall the definition of chain components.

\begin{defi}
\normalfont
Given a continuous map $f\colon X\to X$ and $\delta>0$, a finite sequence $(x_i)_{i=0}^{k}$ of points in $X$, where $k>0$ is a positive integer, is called a {\em $\delta$-chain} of $f$ if $d(f(x_i),x_{i+1})\le\delta$ for every $0\le i\le k-1$. For any $x,y\in X$, the notation $x\rightarrow y$ means that for every $\delta>0$,  there is a $\delta$-chain $(x_i)_{i=0}^k$ of $f$ with $x_0=x$ and $x_k=y$. The {\em chain recurrent set} $CR(f)$ for $f$ is defined by
\[
CR(f)=\{x\in X\colon x\rightarrow x\}.
\]
We define a relation $\leftrightarrow$ in
\[
CR(f)^2=CR(f)\times CR(f)
\]
by: for any $x,y\in CR(f)$, $x\leftrightarrow y$ if and only if $x\rightarrow y$ and $y\rightarrow x$. Note that $\leftrightarrow$ is a closed equivalence relation in $CR(f)^2$ and satisfies $x\leftrightarrow f(x)$ for all $x\in CR(f)$. An equivalence class $C$ of $\leftrightarrow$ is called a {\em chain component} for $f$. We denote by $\mathcal{C}(f)$ the set of chain components for $f$.
\end{defi}

\begin{rem}
\normalfont
The following properties hold
\begin{itemize}
\item $CR(f)=\bigsqcup_{C\in\mathcal{C}(f)}C$, a disjoint union,
\item every $C\in\mathcal{C}(f)$ is a closed $f$-invariant subset of $CR(f)$,
\item for all $C\in\mathcal{C}(f)$, $f|_C\colon C\to C$ is chain transitive, i.e., for any $x,y\in C$ and $\delta>0$, there is a $\delta$-chain $(x_i)_{i=0}^k$ of $f|_C$ with $x_0=x$ and $x_k=y$.
\end{itemize}
\end{rem}

Following \cite{AHK}, we define the {\em terminal} chain components as follows.

\begin{defi}
\normalfont
Given a continuous map $f\colon X\to X$, we say that a closed $f$-invariant subset $S$ of $X$ is {\em chain stable} if for any $\epsilon>0$, there is $\delta>0$ such that every $\delta$-chain $(x_i)_{i=0}^k$ of $f$ with $x_0\in S$ satisfies $d(x_k,S)=\inf_{y\in S}d(x_k,y)\le\epsilon$. We say that $C\in\mathcal{C}(f)$ is {\em terminal} if $C$ is chain stable. We denote by $\mathcal{C}_{\rm ter}(f)$ the set of terminal chain components for $f$.
\end{defi}

The following lemma is from \cite{K3}.

\begin{lem}[{\cite[Lemma 2.1]{K3}}]
Let $f\colon X\to X$ be a continuous map. For any $x\in X$, there are $C\in\mathcal{C}_{\rm ter}(f)$ and $y\in C$ such that $x\rightarrow y$.
\end{lem}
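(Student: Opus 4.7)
The plan is a Zorn-type minimality argument applied to the reachability set $R(x)=\{y\in X:x\rightarrow y\}$: I isolate a minimal non-empty closed subset that is forward-saturated under $\rightarrow$, and then show that this minimal set is necessarily a single terminal chain component.

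First I would verify that $R(x)$ is non-empty (since $f(x)\in R(x)$ via the one-step chain $(x,f(x))$) and closed: if $y_n\to y$ with $x\rightarrow y_n$, then given $\delta>0$, uniform continuity of $f$ lets me fix $n$ so that $d(y_n,y)$ is small enough to convert a $\delta/2$-chain from $x$ to $y_n$ into a $\delta$-chain from $x$ to $y$ by replacing the last element. I also record the key closure property: $y\in R(x)$ and $y\rightarrow z$ imply $z\in R(x)$ by transitivity of $\rightarrow$. Let $\mathcal{F}$ be the family of non-empty closed subsets $S\subset R(x)$ with this same closure property. Any totally ordered subfamily of $\mathcal{F}$ (under inclusion) has non-empty intersection by compactness of $X$, and the intersection lies in $\mathcal{F}$; Zorn's lemma then produces a minimal element $S\in\mathcal{F}$.

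Next I would show $S$ is a single chain component. Fix $y\in S$. The set $R(y)$ is non-empty, closed, forward-saturated, contained in $R(x)$ (by $x\rightarrow y$ and transitivity), and contained in $S$ (by forward-saturation of $S$), so $R(y)\in\mathcal{F}$ and minimality forces $R(y)=S$. In particular $y\rightarrow y$, so $y\in CR(f)$; let $C$ be its chain component. Any $z\in C$ satisfies $y\rightarrow z$, hence $z\in R(y)=S$, giving $C\subset S$. Conversely, given $z\in S=R(y)$, running the same argument with $z$ in place of $y$ shows $R(z)=S\ni y$, so $z\rightarrow y$; combined with $y\rightarrow z$ this yields $y\leftrightarrow z$ and $z\in C$. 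Hence $S=C$, and by construction $x\rightarrow y$ for every $y\in C$.

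The main obstacle is checking that $C$ is terminal, since the forward-saturation of $C$ under $\rightarrow$ (which comes for free from $S\in\mathcal{F}$) is only a pointwise property, while chain stability demands a single $\delta$ that controls \emph{all} $\delta$-chains starting in $C$. I would argue by contradiction: if $C$ were not chain stable, there would exist $\epsilon>0$ and a sequence of $\delta_n$-chains with $\delta_n\to 0$, each starting at some $a_n\in C$ and ending at some $b_n$ with $d(b_n,C)>\epsilon$. Passing to subsequences, $a_n\to a^*\in C$ and $b_n\to b^*$ with $d(b^*,C)\ge\epsilon$. A splicing argument at both endpoints, using uniform continuity of $f$ to absorb the small perturbations $d(a^*,a_n)$ and $d(b_n,b^*)$, would yield for every $\delta'>0$ a $\delta'$-chain from $a^*$ to $b^*$; hence $a^*\rightarrow b^*$. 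Forward-saturation of $S=C$ then forces $b^*\in C$, contradicting $d(b^*,C)\ge\epsilon$. Thus $C\in\mathcal{C}_{\mathrm{ter}}(f)$, completing the proof.
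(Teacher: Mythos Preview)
The paper does not supply a proof of this lemma; it is simply quoted from \cite{K3}. Your argument is correct and is one of the standard ways to establish the result: a Zorn-type extraction of a minimal closed forward-$\rightarrow$-saturated subset of $R(x)=\{y:x\rightarrow y\}$, followed by the verification that such a minimal set is a single chain component and that forward-saturation together with a limiting argument forces chain stability.

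Two minor cosmetic remarks. In the closedness of $R(x)$ you invoke uniform continuity of $f$, but replacing only the terminal point of a chain requires nothing more than the triangle inequality; uniform continuity is genuinely needed only at the \emph{initial} splice in your termination step (to control $d(f(a^\ast),f(a_n))$), so you might relocate that justification. Also, since $X$ is a compact metric space the family $\mathcal{F}$ can be handled by a decreasing sequence rather than full Zorn, but this is a matter of taste and your use of Zorn is perfectly valid.
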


Given a continuous map $f\colon X\to X$ and $x\in X$, the {\em $\omega$-limit set} $\omega(x,f)$ of $x$ for $f$ is defined as the set of $y\in X$ such that
\[
\lim_{j\to\infty}f^{i_j}(x)=y
\]
for some sequence $0\le i_1<i_2<\cdots$. Note that $\omega(x,f)$ is a closed $f$-invariant subset of $X$ and satisfies
\[
\lim_{i\to\infty}d(f^i(x),\omega(x,f))=0.
\]
Since we have $y\rightarrow z$ for all $y,z\in\omega(x,f)$, there is an unique $C(x,f)\in\mathcal{C}(f)$ such that $\omega(x,f)\subset C(x,f)$ and so
\[
\lim_{i\to\infty}d(f^i(x),C(x,f))=0.
\]

The first result of this paper is the following theorem. For a subset $S$ of a topological space $Z$, $\overline{S}$ and ${\rm int}[S]$ denote the closure and the interior of $S$ respectively.

\begin{thm}
Let $f\colon X\to X$ be a continuous map. For any $x\in Sh(f)$ and $r>0$,
\begin{itemize}
\item[(1)] if $C(x,f)\in\mathcal{C}_{\rm ter}(f)$ and $x\in Sen_r(f)$, then $x\in Ent_s(f)$ for all $0<s<r$,
\item[(2)] if $x\in{\rm int}[Sen_r(f)]$, then $x\in Ent_s(f)$ for all $0<s<r$.
\end{itemize}
\end{thm}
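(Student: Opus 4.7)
The plan is to prove both parts by producing, inside every closed neighborhood $K$ of $x$, a family of $(N,s)$-separated orbits whose cardinality grows exponentially in $N$; this forces $h(f,K,s)>0$ for every $K\in\mathcal{K}(x)$, which is exactly $x\in Ent_s(f)$. The horseshoe-type family will be built by combining three ingredients: the shadowability of $x$, which lets us realize chosen $\delta$-pseudo-orbits starting at $x$ as true orbits in $K$; the $r$-sensitivity at a point $p$ in a terminal chain component $C$ reachable from $x$, providing a separating pair $u_0,u_1\in X$ with $d(f^M(u_0),f^M(u_1))>r$; and the chain transitivity together with chain stability of $C$, which allows the pseudo-orbit to return to $p$ after each branching so the construction can be iterated freely.

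For part (1), I would fix $0<s<r$ and $K\in\mathcal{K}(x)$, choose $\epsilon<(r-s)/2$ small enough that $B_\epsilon(x)\subset K$, and take the shadowability modulus $\delta>0$ of $x$ for this $\epsilon$. Since $Sen_r(f)$ is closed and $f$-invariant, $\omega(x,f)\subset Sen_r(f)$; pick $p\in\omega(x,f)\subset C(x,f)\cap Sen_r(f)$, and note that $f^{k}(x)$ visits any neighborhood of $p$ infinitely often. For each word $w=w_1\cdots w_n\in\{0,1\}^n$, I would build a $\delta$-pseudo-orbit $\xi_w=(x_j^w)_{j\ge 0}$ starting at $x$ by concatenating an initial piece of the true orbit of $x$ until it is $\delta$-close to $p$, followed by $n$ identical-length blocks each of the form: jump by one $\delta$-step to $u_{w_i}$ (the witness of $r$-sensitivity at an appropriate iterate of $p$), run the true orbit $M$ steps, then take a uniform-length $\delta$-chain returning to $p$ (available from chain transitivity and chain stability of $C(x,f)$). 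Shadowability then supplies $y_w\in X$ with $d(y_w,x)\le\epsilon$, so $y_w\in K$, and with $\xi_w$ being $\epsilon$-shadowed; for $w\ne w'$ the two pseudo-orbits differ by more than $r$ at a specific index in the earliest differing block, hence the shadows differ there by more than $r-2\epsilon>s$. Thus $\{y_w\}_{w\in\{0,1\}^n}$ is an $(N,s)$-separated subset of $K$ of size $2^n$ with $N$ linear in $n$, yielding $h(f,K,s)\ge \log 2/(M+L)>0$ where $L$ is the length of the return $\delta$-chain.

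For part (2), $C(x,f)$ need not be terminal, so I would instead invoke the lemma above to obtain some $C\in\mathcal{C}_{\rm ter}(f)$ and $y\in C$ with $x\to y$, and produce a sensitive anchor $p\in C\cap Sen_r(f)$ as follows: pick any chain-recurrent $q\in C$, form a $\delta$-pseudo-orbit from $x$ that chains into $q$ and then loops near $q$ infinitely often using chain recurrence in $C$, and shadow it by some $y_\delta\in X$. Since $x\in{\rm int}[Sen_r(f)]$, once $\epsilon$ is small the shadow $y_\delta$ lies in $Sen_r(f)$; the iterates $f^{k_i}(y_\delta)$, trapped in $Sen_r(f)$ by $f$-invariance, approach $q$, and closedness of $Sen_r(f)$ forces $q\in Sen_r(f)$. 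With $p:=q$, the horseshoe construction of (1) runs with the only change being that the initial segment from $x$ to $p$ uses a $\delta$-chain, available since $x\to p$, in place of a piece of true orbit.

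The technical hurdle is the \emph{return} step inside each block: after running the orbit of $u_{w_i}$ for $M$ steps, the point $f^M(u_{w_i})$ could a priori land anywhere, yet it must be linkable back to $p$ by a $\delta$-chain of length bounded uniformly in $w$ and $i$. This is precisely where terminality is used: taking $u_{w_i}$ close enough to $p\in C$, chain stability of $C$ confines the orbit $\{f^k(u_{w_i})\}_{k=0}^{M}$ to a prescribed neighborhood of $C$, and then a uniform version of chain transitivity on $C$ (via a finite $\delta$-net of $C$) produces the required $\delta$-chain from $f^M(u_{w_i})$ to $p$ of length bounded by an $L=L(\delta)$ independent of $w$ and $i$. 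Correctly nesting the parameters $\epsilon\gg\delta\gg\eta$ so that the shadowing modulus, the sensitivity witness $M$, and the chain-stability modulus of $C$ are mutually compatible is the delicate bookkeeping; once that is arranged, the exponential count of branches delivers $x\in Ent_s(f)$ immediately.
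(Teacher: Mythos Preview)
Your proposal is correct in outline and uses the same core mechanism as the paper: locate an $r$-sensitive point $p$ in a terminal component, manufacture two returning $\delta$-pseudo-orbits that separate by more than $r$, code over $\{0,1\}^n$, and shadow from $x$. The paper organizes this differently in one key respect. Instead of using true-orbit witnesses $u_0,u_1$ near $p$ and invoking chain stability to keep their forward orbits near $C$, the paper first converts ordinary sensitivity at $p\in C$ into a chain-level notion $Sen_r^\ast(f|_C)$ (two $\delta$-chains of $f|_C$ from $p$ that separate by $>r$; Lemma~2.3), and then runs the horseshoe entirely with $\delta$-chains inside $C$ (Lemma~2.2). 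Because the separating branches $\alpha_0,\alpha_1$ already lie in $C$ and start at $p$, the return is a bare application of chain transitivity of $f|_C$, and the concatenations $\gamma_0=\alpha_0\beta_0\alpha_1\beta_1$, $\gamma_1=\alpha_1\beta_1\alpha_0\beta_0$ are automatically $\delta$-chains of equal length from $p$ to $p$; this sidesteps the ``jump to $u_{w_i}$'' transition you have to negotiate (which, to be a legal $\delta$-step after a return chain ending at $p$, forces the witnesses to be chosen near $f(p)$ rather than near $p$) as well as the uniform-length return bookkeeping. The paper also builds the horseshoe at the point $y\in C$ and then transfers $Ent_s$ back to $x$ via a separate lemma ($x\in Sh(f)$, $x\rightarrow y\in Ent_r(f)$ imply $x\in Ent_s(f)$; Lemma~2.1), whereas you prepend an initial segment from $x$ to $p$ and build directly at $x$; both routes are fine. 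For part~(2) your argument that $q\in Sen_r(f)$ matches the paper's, but note that a single shadow $y_\delta$ only brings iterates within $\epsilon$ of $q$, not arbitrarily close; one must take $\epsilon_j\to0$ and a corresponding sequence of shadows, exactly as the paper does explicitly.
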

 
The next lemma is a consequence of the Baire category theorem. The proof is left as an exercise for the reader.
 
\begin{lem} 
Let $Z$ be a complete metric space. Every sequence $A_j$, $j\ge1$, of closed subsets of $Z$ satisfies $\overline{{\rm int}[\bigcup_{j\ge1}A_j]}=\overline{\bigcup_{j\ge1}{\rm int}[A_j]}$.
\end{lem}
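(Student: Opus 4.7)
The plan is to prove the two inclusions separately. For the direction $\overline{\bigcup_{j\ge1}\mathrm{int}[A_j]}\subset\overline{\mathrm{int}[\bigcup_{j\ge1}A_j]}$, observe that each $\mathrm{int}[A_j]$ is an open set contained in $\bigcup_{j\ge1}A_j$, hence contained in the interior of that union; taking a union over $j$ and then the closure gives the inclusion. Note that this direction does not use completeness.

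For the substantive direction $\overline{\mathrm{int}[\bigcup_{j\ge1}A_j]}\subset\overline{\bigcup_{j\ge1}\mathrm{int}[A_j]}$, I would set $U=\mathrm{int}[\bigcup_{j\ge1}A_j]$ and reduce to showing that $\bigcup_{j\ge1}\mathrm{int}[A_j]$ is dense in $U$; taking closures then yields the desired inclusion. Density amounts to showing that every nonempty open $V\subset U$ meets $\bigcup_{j\ge1}\mathrm{int}[A_j]$. To this end, I would apply the Baire category theorem to $V$: it is a nonempty open subset of the complete metric space $Z$, hence itself a Baire space, and $\{V\cap A_j\}_{j\ge1}$ is a countable closed cover of $V$ (closed in $V$ because each $A_j$ is closed in $Z$, and covering because $V\subset U\subset\bigcup_{j\ge1}A_j$). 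Therefore some $V\cap A_{j_0}$ has nonempty interior in $V$, and since $V$ is open in $Z$ this interior is also open in $Z$ and contained in $A_{j_0}$, so contained in $\mathrm{int}[A_{j_0}]$. This produces the required point of $V\cap\bigcup_{j\ge1}\mathrm{int}[A_j]$.

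The crux is simply the Baire property of $V$, together with the elementary observation that interiors computed in an open subspace coincide with interiors in the ambient space; there is no deeper obstacle, which is presumably why the author leaves the proof as an exercise.
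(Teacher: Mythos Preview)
Your proof is correct and matches the paper's intent: the paper leaves this as an exercise, noting only that it is a consequence of the Baire category theorem, and your argument is precisely the standard Baire application one would expect. Both the easy inclusion and the density argument via Baire on the open set $V$ are carried out cleanly, with the key observation that open subsets of a complete metric space are Baire spaces.
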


By Theorem 1.1 and Lemma 1.2, we obtain the following theorem.

\begin{thm}
Given a continuous map $f\colon X\to X$, if $X=\overline{Sh(f)}$, then
\[
\overline{{\rm int}[Sen(f)]}\subset Ent(f).
\]
\end{thm}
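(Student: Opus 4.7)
The plan is to combine Theorem 1.1(2) with Lemma 1.2, after expressing $Sen(f)$ as a countable increasing union of closed sets. Since $Sen_r(f)\subset Sen_{r'}(f)$ whenever $0<r'\le r$, we have
\[
Sen(f)=\bigcup_{n\ge1}Sen_{1/n}(f),
\]
and by Remark 1.2(1) each $Sen_{1/n}(f)$ is closed in $X$. Applying Lemma 1.2 with $A_n=Sen_{1/n}(f)$ (noting $X$ is a compact, hence complete, metric space) yields
\[
\overline{{\rm int}[Sen(f)]}=\overline{\bigcup_{n\ge1}{\rm int}[Sen_{1/n}(f)]}.
\]

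Next I would show that every $n\ge1$ satisfies ${\rm int}[Sen_{1/n}(f)]\subset Ent(f)$. Fix $n\ge1$ and set $U_n={\rm int}[Sen_{1/n}(f)]$, which is open in $X$. The hypothesis $X=\overline{Sh(f)}$ implies that $Sh(f)\cap U_n$ is dense in $U_n$. For any $x\in Sh(f)\cap U_n$, Theorem 1.1(2) (applied with $r=1/n$) gives $x\in Ent_s(f)$ for every $0<s<1/n$; in particular $x\in Ent(f)$. Since $Ent(f)$ is closed by Remark 1.3, taking closures in $X$ gives
\[
U_n\subset\overline{Sh(f)\cap U_n}\subset\overline{Ent(f)}=Ent(f).
\]

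Taking the union over $n$ and then the closure, using again that $Ent(f)$ is closed, we conclude
\[
\overline{{\rm int}[Sen(f)]}=\overline{\bigcup_{n\ge1}U_n}\subset Ent(f),
\]
which is the desired inclusion. The proof is essentially a bookkeeping exercise once Theorem 1.1 is in place; the only genuinely non-trivial step is the passage from the pointwise statement (Theorem 1.1) to a statement about the interior of $Sen(f)$, and this is exactly what Lemma 1.2 is designed to handle, via the countable exhaustion of $Sen(f)$ by the closed sets $Sen_{1/n}(f)$ and the density of $Sh(f)$.
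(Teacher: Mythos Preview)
Your proof is correct and follows essentially the same route as the paper's: both express $Sen(f)$ as a countable union of the closed sets $Sen_{r_j}(f)$, invoke Lemma~1.2, apply Theorem~1.1(2) on $Sh(f)\cap{\rm int}[Sen_{r_j}(f)]$, and then use the density of $Sh(f)$ together with the closedness of the target to pass to all of ${\rm int}[Sen_{r_j}(f)]$. The only cosmetic difference is that the paper records the intermediate inclusion ${\rm int}[Sen_{r_j}(f)]\subset Ent_{s_j}(f)$ for a chosen $0<s_j<r_j$ before passing to $Ent(f)$, whereas you go straight to $Ent(f)$.
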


\begin{proof}
Letting $(r_j)_{j\ge1}$ be a sequence of numbers with $0<r_1>r_2>\cdots$ and $\lim_{j\to\infty}r_j=0$, we have
\[
Sen(f)=\bigcup_{j\ge1}Sen_{r_j}(f).
\]
We take a sequence $(s_j)_{j\ge1}$ of numbers such that $0<s_j<r_j$ for all $j\ge1$. By Theorem 1.1, we have
\[
{\rm int}[Sen_{r_j}(f)]\cap Sh(f)\subset Ent_{s_j}(f)
\]
for all $j\ge1$. Note that an open subset $U$ of $X$ and a subset $B$ of $X$ satisfies
\[
U\cap\overline{B}\subset\overline{U\cap B}.
\]
By $X=\overline{Sh(f)}$, we obtain
\[
{\rm int}[Sen_{r_j}(f)]={\rm int}[Sen_{r_j}(f)]\cap\overline{Sh(f)}\subset\overline{{\rm int}[Sen_{r_j}(f)]\cap Sh(f)}\subset \overline{Ent_{s_j}(f)}=Ent_{s_j}(f)
\]
for all $j\ge1$. With the aid of Lemma 1.2, we obtain
\[
\overline{{\rm int}[Sen(f)]}=\overline{{\rm int}[\bigcup\nolimits_{j\ge1}Sen_{r_j}(f)]}=\overline{\bigcup\nolimits_{j\ge1}{\rm int}[Sen_{r_j}(f)]}\subset\overline{\bigcup\nolimits_{j\ge1}Ent_{s_j}(f)}\subset Ent(f);
\]
thus, the theorem has been proved.
\end{proof}

Given a continuous map $f\colon X\to X$, we say that $x\in X$ is
\begin{itemize}
\item an {\em equicontinuity point} for $f$ if for any $\epsilon>0$, there is $\delta>0$ such that every $y\in X$ with $d(x,y)\le\delta$ satisfies
\[
\sup_{i\ge0}d(f^i(x),f^i(y))\le\epsilon,
\]
\item a {\em chain continuity point} for $f$ \cite{A} if for any $\epsilon>0$, there is $\delta>0$ such that every $\delta$-pseudo orbit $(x_i)_{i\ge0}$ of $f$ with $x_0=x$ is $\epsilon$-shadowed by $x$, i.e., satisfies
\[
\sup_{i\ge0}d(f^i(x),x_i)\le\epsilon.
\]
\end{itemize}
We denote by $EC(f)$ (resp.\:$CC(f)$) the set of equicontinuity (resp.\:chain continuity) points for $f$. By
\[
EC(f)=X\setminus Sen(f)=\bigcap_{n\ge1}[X\setminus Sen_{n^{-1}}(f)],
\]
we see that $EC(f)$ is a $G_\delta$-subset of $X$. It is easy to see that
\[
CC(f)=Sh(f)\cap EC(f)
\]
and so $CC(f)$ is a $G_\delta$-subset of $X$. We say that $f$ is {\em almost chain continuous} if $CC(f)$ is a dense ($G_\delta$-)subset of $X$.
\begin{thm}

For a continuous map $f\colon X\to X$, if $h_{\rm top}(f)=0$, then the following conditions are equivalent:
\begin{itemize}
\item[(A)] $X=\overline{Sh(f)}$,
\item[(B)] $f$ is almost chain continuous.
\end{itemize}
\end{thm}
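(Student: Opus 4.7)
My plan is to treat the two directions asymmetrically. The implication $(B)\Rightarrow(A)$ should be a one-liner: since $CC(f)=Sh(f)\cap EC(f)\subset Sh(f)$, density of $CC(f)$ in $X$ immediately forces $X=\overline{Sh(f)}$. The whole content of the theorem lies in $(A)\Rightarrow(B)$.

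For $(A)\Rightarrow(B)$, the first step is to observe that the vanishing of topological entropy collapses $Ent(f)$ to the empty set. The point is that $X$ itself belongs to $\mathcal{K}(x)$ for every $x\in X$, and $h(f,X)=h_{\rm top}(f)=0$, so the defining condition ``$h(f,K)>0$ for every $K\in\mathcal{K}(x)$'' can never be met. Hence $Ent(f)=\emptyset$. Feeding this together with the hypothesis $X=\overline{Sh(f)}$ into Theorem 1.2, I then obtain
\[
\overline{{\rm int}[Sen(f)]}\subset Ent(f)=\emptyset,
\]
so ${\rm int}[Sen(f)]=\emptyset$; equivalently, $EC(f)=X\setminus Sen(f)$ is dense in $X$.

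The last step is a Baire-category argument. Since $X$ is compact metric and therefore completely metrizable, and since both $Sh(f)$ (by Remark 1.1) and $EC(f)$ (from the description $EC(f)=\bigcap_{n\ge1}[X\setminus Sen_{n^{-1}}(f)]$ given before the statement) are $G_\delta$-subsets of $X$, the intersection $CC(f)=Sh(f)\cap EC(f)$ of two dense $G_\delta$-sets is again a dense $G_\delta$-subset of $X$. This is exactly almost chain continuity, giving $(B)$.

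I do not anticipate a serious obstacle: the deep input is packaged in Theorem 1.2, and the zero-entropy assumption is used in a single, essentially formal, step to annihilate $Ent(f)$. The only place to be careful is the routine verification that $X=\overline{Sh(f)}$ combined with $Sh(f)$ being $G_\delta$ upgrades density to dense $G_\delta$, which is needed so that Baire applies to the intersection with $EC(f)$.
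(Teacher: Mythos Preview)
Your proof is correct and follows essentially the same approach as the paper: both directions are argued exactly as you describe, with Theorem~1.2 supplying the key inclusion $\overline{{\rm int}[Sen(f)]}\subset Ent(f)$, the zero-entropy hypothesis forcing $Ent(f)=\emptyset$, and Baire applied to the dense $G_\delta$-sets $Sh(f)$ and $EC(f)$. The only cosmetic difference is that the paper phrases the entropy step contrapositively (${\rm int}[Sen(f)]\ne\emptyset\Rightarrow Ent(f)\ne\emptyset\Rightarrow h_{\rm top}(f)>0$), whereas you argue directly that $h_{\rm top}(f)=0$ gives $Ent(f)=\emptyset$.
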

\begin{proof}
The implication (A)$\implies$(B): Since $Sh(f)$ is a $G_\delta$-subset of $X$, if $X=\overline{Sh(f)}$, then $Sh(f)$ is a dense $G_\delta$-subset of $X$. Due to Theorem 1.2, if $X=\overline{Sh(f)}$ and
\[
{\rm int}[Sen(f)]\ne\emptyset,
\]
then $Ent(f)\ne\emptyset$ and so $h_{\rm top}(f)>0$.  By $X=\overline{Sh(f)}$ and $h_{\rm top}(f)=0$, we obtain
\[
{\rm int}[Sen(f)]=\emptyset;
\]
therefore, $EC(f)=X\setminus Sen(f)$ is a dense $G_\delta$-subset of $X$. By the Baire category theorem, we conclude that
\[
CC(f)=Sh(f)\cap EC(f)
\]
is a dense $G_\delta$-subset of $X$.

The implication (B)$\implies$(A) is a direct consequence of the inclusion $CC(f)\subset Sh(f)$.
\end{proof}

\begin{rem}
\normalfont
By Theorem 1.3, if a continuous map $f\colon X\to X$ satisfies $h_{\rm top}(f)=0$ and $X=Sh(f)$, i.e., the shadowing property, then $f$ is almost chain continuous. An alternative proof of this can be given as follows.

\begin{proof}
By Theorem A.1, for a continuous map $f\colon X\to X$ and $x\in X$, $x\in CC(f)$ holds exactly if $C(x,f)\in\mathcal{C}_{\rm ter}(f)$ and $CC(f|_{C(x,f)})=C(x,f)$. Every continuous map $f\colon X\to X$ satisfies $CR(f|_{CR(f)})=CR(f)$ and
\[
\mathcal{C}(f)=\mathcal{C}(f|_{CR(f)})=\mathcal{C}_{\rm ter}(f|_{CR(f)}).
\]
Assume that $h_{\rm top}(f)=0$ and $X=Sh(f)$. By Theorem B.1, $X=Sh(f)$ implies $CR(f)=Sh(f|_{CR(f)})$. If $Sen(f|_{CR(f)})\ne\emptyset$, by taking $x\in Sen(f|_{CR(f)})$ and $C\in\mathcal{C}(f)$ with $x\in C$, we obtain $C(x,f|_{CR(f)})=C$ and $x\in Sen_r(f|_{CR(f)})$ for some $r>0$. By $C\in\mathcal{C}_{\rm ter}(f|_{CR(f)})$ and Theorem 1.1, we obtain $x\in Ent_s(f|_{CR(f)})$ for all $0<s<r$ and so $h_{\rm top}(f)>0$, a contradiction. It follows that $Sen(f|_{CR(f)})=\emptyset$; thus, $EC(f|_{CR(f)})=CR(f)$. We obtain
\[
CC(f|_{CR(f)})=Sh(f|_{CR(f)})\cap EC(f|_{CR(f)})=CR(f)
\]
(cf.\:\:Corollary 6 of \cite{Moo}). This implies that $CC(f|_C)=C$ for all $C\in\mathcal{C}(f)$. As a consequence, if $h_{\rm top}(f)=0$ and $X=Sh(f)$, then
\[
CC(f)=\{x\in X\colon C(x,f)\in\mathcal{C}_{\rm ter}(f)\}.
\]
On the other hand, by Theorem 1.1 of \cite{K5}, $X=Sh(f)$ implies that
\[
\{x\in X\colon C(x,f)\in\mathcal{C}_{\rm ter}(f)\}
\]
is a dense $G_{\delta}$-subset $X$. By these conditions, we conclude that $CC(f)$ is a dense $G_{\delta}$-subset $X$, i.e., $f$ is almost chain continuous.
\end{proof}
\end{rem}

Let $f\colon X\to X$ be a continuous map. For $\delta,r>0$ and $n\ge1$, we say that two $\delta$-chains $(x_i)_{i=0}^n$, $(y_i)_{i=0}^n$ of $f$ is {\em $(n,r)$-separated} if $d(x_i,y_i)>r$ for some $0\le i\le n$. Let
\[
s_n(f,X,r,\delta)
\]
denote the largest cardinality of a set of pairwise $(n,r)$-separated $\delta$-chains of $f$. The next lemma is from \cite{Mis}.

\begin{lem}[Misiurewicz]
\[
h_{\rm top}(f)=\lim_{r\to0}\lim_{\delta\to0}\limsup_{n\to\infty}\frac{1}{n}\log{s_n(f,X,r,\delta)}.
\]
\end{lem}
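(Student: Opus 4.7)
The plan is to prove the two inequalities between $h_{\rm top}(f)$ and $H:=\lim_{r\to 0}\lim_{\delta\to 0}\limsup_{n\to\infty}\tfrac{1}{n}\log s_n(f,X,r,\delta)$.

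For the inequality $h_{\rm top}(f)\le H$, every true orbit segment is a $0$-chain, hence a $\delta$-chain for every $\delta>0$. So from an $(n+1,r)$-separated subset $E\subset X$ realizing $s_{n+1}(f,X,r)$, the family of true-orbit $\delta$-chains $(f^i(x))_{i=0}^n$, $x\in E$, is pairwise $(n,r)$-separated in the chain sense; therefore $s_n(f,X,r,\delta)\ge s_{n+1}(f,X,r)$ for every $\delta>0$, and taking $\limsup_n\tfrac{1}{n}\log$, $\lim_{\delta\to 0}$, and $\lim_{r\to 0}$ on the right recovers $h_{\rm top}(f)$.

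For the reverse inequality $H\le h_{\rm top}(f)$, fix $r>0$ and set $\epsilon=r/4$. For each $N\ge 1$, uniform continuity of $f,f^2,\dots,f^N$ yields $\delta_N>0$ such that every $\delta_N$-chain $(x_j)_{j=0}^N$ of $f$ satisfies $d(f^j(x_0),x_j)\le\epsilon$ for all $0\le j\le N$ (straightforward induction on $j$ combining $d(f^{j+1}(x_0),x_{j+1})\le d(f(f^j(x_0)),f(x_j))+\delta$ with uniform continuity of $f$). Let $G_N\subset X$ be a maximal $(N,\epsilon)$-separated subset; being maximal, $G_N$ is also $(N,\epsilon)$-spanning in the Bowen metric $d_N(x,y):=\max_{0\le j<N}d(f^j(x),f^j(y))$, and $|G_N|=s_N(f,X,\epsilon)$. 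For $n=qN$ and any $\delta\le\delta_N$, to each $(n,r)$-separated family $\mathcal F$ of $\delta$-chains assign the map $\psi(\xi)=(g_0,\dots,g_q)\in G_N^{q+1}$, where $g_\ell\in G_N$ is chosen with $d_N(x_{\ell N},g_\ell)\le\epsilon$. If $\psi(\xi)=\psi(\xi')$, then for every $i=\ell N+j$ with $0\le j<N$ a fourfold triangle inequality yields
\[
d(x_i,x_i')\le d(x_i,f^j(x_{\ell N}))+d(f^j(x_{\ell N}),f^j(g_\ell))+d(f^j(g_\ell),f^j(x'_{\ell N}))+d(f^j(x'_{\ell N}),x_i')\le 4\epsilon=r,
\]
contradicting $(n,r)$-separation (the endpoint $i=qN$ is handled analogously using $g_q$). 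Therefore $\psi$ is injective on $\mathcal F$, so $|\mathcal F|\le|G_N|^{q+1}$. Dividing by $n=qN$ and letting $q\to\infty$ gives $\limsup_n\tfrac{1}{n}\log s_n(f,X,r,\delta)\le\tfrac{1}{N}\log s_N(f,X,r/4)$ for every $\delta\le\delta_N$; then successively taking $\lim_{\delta\to 0}$, $\limsup_{N\to\infty}$, and $\lim_{r\to 0}$ on the right produces $h_{\rm top}(f)$ via the standard separated-set formula $h_{\rm top}(f)=\lim_{r\to 0}\limsup_N\tfrac{1}{N}\log s_N(f,X,r)$.

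The main obstacle is orchestrating the constants in the block decomposition: $\epsilon=r/4$ is chosen precisely so that the four triangle-inequality terms (two from the $\delta_N$-approximation of chain segments by true orbits, two from $\epsilon$-spanning by $G_N$) add up to at most $r$, and edge effects at $i=qN$ or for $n\not\equiv 0\pmod N$ must be absorbed into a multiplicative factor in the cardinality bound that is bounded independently of $n$ and hence vanishes after dividing by $n$.
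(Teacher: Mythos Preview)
The paper does not prove this lemma; it merely states it and cites Misiurewicz's original note \cite{Mis}. Your argument is correct and is essentially the standard block-coding proof: the easy inequality comes from regarding true orbits as $\delta$-chains, and the hard inequality is obtained by cutting an arbitrary $\delta$-chain into blocks of length $N$, using that for $\delta$ small enough each block is $\epsilon$-shadowed by the true orbit of its initial point, and then coding each block by a point of a maximal $(N,\epsilon)$-separated set.

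Two small remarks on presentation. First, after establishing the bound $\limsup_n\tfrac{1}{n}\log s_n(f,X,r,\delta)\le\tfrac{1}{N}\log s_N(f,X,r/4)$ for every $\delta\le\delta_N$, you say you ``take $\limsup_{N\to\infty}$'' on the right. Strictly speaking, since the left side is independent of $N$ and the inequality holds for every $N$, what you actually get is that the left side is $\le\inf_N\tfrac{1}{N}\log s_N(f,X,r/4)$; this is of course $\le h(f,X,r/4)\le h_{\rm top}(f)$, so the conclusion is the same. Second, for $n\not\equiv 0\pmod N$ you can simply extend each $\delta$-chain of length $n$ to length $(q+1)N$ by appending the true orbit of its last point; the extended chains remain pairwise $((q+1)N,r)$-separated, giving $s_n(f,X,r,\delta)\le s_N(f,X,r/4)^{q+2}$, and the extra factor disappears after dividing by $n$ and letting $n\to\infty$. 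These are exactly the ``edge effects'' you flagged, and they are handled as you indicated.
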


Let $f\colon X\to X$ be a continuous map. Following \cite{YZ}, we define the set $Ent_{up}(f)$ of {\em uniform entropy points} for $f$ by
\[
Ent_{up}(f)=\bigcup_{r,b>0}Ent_{r,b}(f).
\]
Note that
\[
Ent(f)=\overline{Ent_{up}(f)}.
\]

The following lemma can be proved by Lemma 1.3 and a similar argument as in the proof of Theorem 1.2 in \cite{K4}. Note that the `only if' part is rather trivial.

\begin{lem}
For a continuous map $f\colon X\to X$ and $C\in\mathcal{C}_{\rm ter}(f)$, $h_{\rm top}(f|_C)>0$ holds if and only if $C\cap Ent_{up}(f)\ne\emptyset$.
\end{lem}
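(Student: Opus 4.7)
The plan is to handle the two implications separately. The ``only if'' direction, $h_{\rm top}(f|_C)>0\Rightarrow C\cap Ent_{up}(f)\ne\emptyset$, I would dispose of by applying the general principle recorded in Remark 1.3 (that $h(g,K,r)\ge b$ implies $K\cap Ent_{r,b}(g)\ne\emptyset$ for any continuous self-map $g$ and closed $K$) to $g=f|_C$ on $C$: pick $r,b>0$ with $h(f|_C,C,r)\ge b$ and obtain $y\in C\cap Ent_{r,b}(f|_C)$. Since $C$ is closed and $f$-invariant, an $(n,r)$-separated subset of $K\cap C$ under $f|_C$ is $(n,r)$-separated as a subset of $K$ under $f$, so $h(f,K,r)\ge h(f|_C,K\cap C,r)\ge b$ for every $K\in\mathcal{K}(y)$, and thus $y\in Ent_{r,b}(f)\subset Ent_{up}(f)$.

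For the converse, fix $x\in C\cap Ent_{r,b}(f)$ with $r,b>0$. The first step is to exploit terminality of $C$: I would show that for every $\epsilon>0$ there exists $\eta>0$ with $\bigcup_{i\ge0}f^i(N_\eta(C))\subset N_\epsilon(C)$, where $N_\alpha(C):=\{z\in X\colon d(z,C)\le\alpha\}$. Given $\delta$ supplied by chain stability of $C$ at accuracy $\epsilon$, uniform continuity of $f$ furnishes $\eta\le\min\{\epsilon,\delta\}$ such that $d(y,z)\le\eta$ forces $d(f(z),f(y))\le\delta$; then for $y\in N_\eta(C)$ and $z\in C$ with $d(y,z)\le\eta$, the sequence $z,f(y),f^2(y),\ldots$ is a $\delta$-chain starting in $C$, so $d(f^i(y),C)\le\epsilon$ for every $i\ge1$.

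Next, I would take the closed neighborhood $K=\overline{B(x,\eta)}\subset N_\eta(C)$. Since $x\in Ent_{r,b}(f)$, we have $h(f,K,r)\ge b$, so for any $b'<b$ there are arbitrarily large $n$ admitting an $(n,r)$-separated subset $E_n\subset K$ of size at least $e^{b'n}$. For each $y\in E_n$ and $0\le i\le n$ I would pick $z_i(y)\in C$ with $d(z_i(y),f^i(y))\le\epsilon$ (possible by the previous step). Two triangle inequalities then show that $(z_i(y))_{i=0}^n$ is a $\delta'$-chain of $f|_C$ in $C$ for $\delta':=\omega_f(\epsilon)+\epsilon$ (with $\omega_f$ a modulus of continuity for $f$), and that distinct $y,y'\in E_n$ produce chains pairwise $(n,r-2\epsilon)$-separated; hence $s_n(f|_C,C,r-2\epsilon,\delta')\ge e^{b'n}$ for infinitely many $n$.

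Finally, to invoke Lemma 1.3, given any $r^*\in(0,r)$ and any $\delta^*>0$, I would choose $\epsilon$ so small that both $r-2\epsilon\ge r^*$ and $\omega_f(\epsilon)+\epsilon\le\delta^*$; since $s_n(f|_C,C,\cdot,\cdot)$ is non-increasing in the separation parameter and non-decreasing in the chain parameter, the bound carries over to $s_n(f|_C,C,r^*,\delta^*)\ge e^{b'n}$ for infinitely many $n$, and Misiurewicz's formula then yields $h_{\rm top}(f|_C)\ge b'>0$. The main obstacle is this projection step in the middle: one must simultaneously keep the projected chains inside $C$ with small chain error $\delta'$ and preserve separation near $r$, and then reconcile these quantities with the free parameters $r^*$ and $\delta^*$ needed to apply Misiurewicz's formula.
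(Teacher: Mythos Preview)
Your proposal is correct and matches the approach the paper points to: the paper does not write out a full proof but states that Lemma~1.4 ``can be proved by Lemma~1.3 and a similar argument as in the proof of Theorem~1.2 in~\cite{K4},'' with the `only if' part being ``rather trivial.'' Your `if' argument---using chain stability of $C$ to keep forward orbits near $C$, projecting separated orbit segments to pairwise separated $\delta'$-chains in $C$, and then invoking Misiurewicz's formula (Lemma~1.3)---is precisely this scheme, and your handling of the `only if' part via Remark~1.3 is the trivial direction the paper alludes to.
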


By using Lemma 1.1 and Lemma 1.4, we obtain the following theorem.

\begin{thm}
For a continuous map $f\colon X\to X$, if $X=Sh(f)$, then the following conditions are equivalent:
\begin{itemize}
\item[(A)] $X=Ent_{up}(f)$,
\item[(B)] $h_{\rm top}(f|_C)>0$ for all $C\in\mathcal{C}_{\rm ter}(f)$.
\end{itemize}
\end{thm}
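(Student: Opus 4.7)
The plan is to treat the two implications asymmetrically: (A)$\implies$(B) follows almost verbatim from Lemma 1.4, while (B)$\implies$(A) is where the shadowing hypothesis does real work.

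For (A)$\implies$(B), I would fix any $C\in\mathcal{C}_{\rm ter}(f)$, pick any $x\in C$, observe that $x\in Ent_{up}(f)$ by assumption, and apply Lemma 1.4 to conclude $h_{\rm top}(f|_C)>0$.

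For (B)$\implies$(A), fix $x\in X$. By Lemma 1.1 there are $C\in\mathcal{C}_{\rm ter}(f)$ and $y\in C$ with $x\to y$; by (B) and Lemma 1.4 there are $z\in C\cap Ent_{up}(f)$ and $r_0,b_0>0$ with $z\in Ent_{r_0,b_0}(f)$; and chain transitivity on $C$ gives $y\to z$, hence $x\to z$ in $f$. The target is to show $x\in Ent_{r,b_0}(f)$ for some fixed $r\in(0,r_0)$ that does not depend on the closed neighborhood. Fix a small $\epsilon_0\in(0,r_0/2)$ and set $r=r_0-2\epsilon_0$. For an arbitrary closed neighborhood $K$ of $x$, pick $\epsilon\in(0,\epsilon_0]$ with $\overline{B(x,\epsilon)}\subset K$, then use $x\in Sh(f)$ to extract the shadowing constant $\delta>0$ for $\epsilon$, and use $x\to z$ to produce a $(\delta/2)$-chain $(w_i)_{i=0}^{m}$ of $f$ with $w_0=x$ and $w_m=z$. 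Set $K'=\overline{B(z,\delta/2)}$; since $z\in Ent_{r_0,b_0}(f)$, for each large $n$ there is an $(n,r_0)$-separated set $E_n\subset K'$ with $\tfrac{1}{n}\log|E_n|\to h(f,K',r_0)\ge b_0$. For each $u\in E_n$, the sequence
\[
\xi_u=(w_0,w_1,\ldots,w_{m-1},u,f(u),f^2(u),\ldots)
\]
is a $\delta$-pseudo orbit of $f$ starting at $x$, since the only nontrivial bond is $d(f(w_{m-1}),u)\le d(f(w_{m-1}),z)+d(z,u)\le\delta/2+\delta/2=\delta$. Shadowing produces $y_u\in\overline{B(x,\epsilon)}\subset K$ whose orbit $\epsilon$-tracks $\xi_u$, and the triangle inequality transfers the $(n,r_0)$-separation of $E_n$ at times $0,\ldots,n-1$ into $(m+n,r_0-2\epsilon)$-separation of $\{y_u\}_{u\in E_n}$ at times $m,\ldots,m+n-1$. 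Therefore $s_{m+n}(f,K,r)\ge s_{m+n}(f,K,r_0-2\epsilon)\ge|E_n|$, and since $m$ is fixed we obtain $h(f,K,r)\ge b_0$.

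The main obstacle, and the reason to decouple $\epsilon_0$ from $\epsilon$, is uniformity: the parameters $r$ and $b_0$ certifying $x\in Ent_{up}(f)$ must be independent of $K$, while the shadowing scale $\epsilon$, the constant $\delta$, the chain length $m$, and the auxiliary neighborhood $K'$ of $z$ all depend on $K$. Capping $\epsilon$ by the fixed $\epsilon_0$ locks in $r=r_0-2\epsilon_0$ once and for all, and the $K$-dependent shift $m$ is absorbed by the limsup, so the conclusion $x\in Ent_{r,b_0}(f)\subset Ent_{up}(f)$ survives.
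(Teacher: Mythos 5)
Your proof is correct and follows the same route as the paper: (A)$\implies$(B) is read off from Lemma 1.4, and (B)$\implies$(A) combines Lemma 1.1, Lemma 1.4 and chain transitivity to get $x\rightarrow z$ with $z\in Ent_{r_0,b_0}(f)$, then transfers separated sets from $z$ back to a neighborhood of $x$ by concatenating a chain with orbit segments and shadowing. The only difference is that the paper packages this last transfer step as Lemma 2.1(3) (quoted from \cite{K4}) and simply cites it, whereas you re-prove it inline; the uniformity issue you carefully handle is exactly what that lemma's conclusion ``$x\in Ent_{s,b}(f)$ for all $0<s<r$'' encodes.
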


\begin{proof}
The implication (A)$\implies$(B) is a direct consequence of Lemma 1.4.

The implication (B)$\implies$(A): Due to Lemma 1.1, for any $x\in X$, there are $C\in\mathcal{C}_{\rm ter}(f)$ and $y\in C$ such that $x\rightarrow y$. By $h_{\rm top}(f|_C)>0$, we obtain $z\in Ent_{r,b}(f)$ for some $z\in C$ and $r,b>0$. Since $y,z\in C$, we have $y\rightarrow z$. This with $x\rightarrow y$ implies $x\rightarrow z$. By $x\in Sh(f)$ and Lemma 2.1 in Section 2, we obtain $x\in Ent_{s,b}(f)$ for all $0<s<r$; thus, $x\in Ent_{up}(f)$. Since $x\in X$ is arbitrary, we conclude that $X=Ent_{up}(f)$.
\end{proof}

We recall the definition of C-entropy points from \cite{YZ}.  For a continuous map $f\colon X\to X$ and an open cover $\mathcal{U}$ of $X$, let $h(f,\mathcal{U})$ denote the entropy of $f$ relative to $\mathcal{U}$ (see \cite{W} for details). We say that $(x,y)\in X^2$ with $x\ne y$ is an {\em entropy pair} for $f$ if for any closed neighborhoods $A$ of $x$ and $B$ of $y$,
\[
h(f,\{X\setminus A,X\setminus B\})>0
\]
whenever $A\cap B=\emptyset$. We denote by $E_2(f)$ the set of entropy pairs for $f$. We say that $x\in X$ is a {\em C-entropy point} for $f$ if $(x,y)\in E_2(f)$ for some $y\in X$ with $x\ne y$. Let $E_1(f)$ denote the set of C-entropy points for $f$. By Theorem 3.4 of \cite{YZ}, we know that $E_1(f)\subset Ent(f)$. In \cite{YZ}, Ye and Zhang raised the following question.

\begin{ques}[{\cite[Question 6.11]{YZ}}]
Does $E_1(f)\subset Ent_{up}(f)$ hold for any homeomorphism $f\colon X\to X$?
\end{ques}

In Section 3, we present an example of a homeomorphism $f\colon X\to X$ with the following properties

\begin{itemize}
\item[(1)] $X=Sh(f)$,
\item[(2)] there exists $C\in\mathcal{C}_{\rm ter}(f)$ such that $C$ is an infinite set and satisfies $C\subset E_1(f)\setminus Ent_{up}(f)$.
\end{itemize}

This paper consists of three sections and two appendices. In the next section, we prove Theorem 1.1. In Section 3, we present the example just mentioned above. In Appendix A, we prove Theorem A.1. In Appendix B, Theorem B.1 is proved. 

\section{Proof of Theorem 1.1}

In this section, we prove Theorem 1.1. For the proof, we need a few lemmas. The following lemma is proved in \cite{K4} (see Lemma 1.2 and Lemma 3.1 of \cite{K4} where $C(x)$ is defined by $C(x)=\{x\}\cup\{y\in X\colon x\rightarrow y\}$).

\begin{lem}
Let $f\colon X\to X$ be a continuous map. Let $x,y\in X$ and $r,b>0$. If $x\rightarrow y$, then
\begin{itemize}
\item[(1)] $x\in Sh(f)$ implies $y\in Sh(f)$,
\item[(2)] $x\in Sh(f)$ and  $y\in Ent_r(f)$ imply $x\in Ent_s(f)$ for all $0<s<r$,
\item[(3)] $x\in Sh(f)$ and  $y\in Ent_{r,b}(f)$ imply $x\in Ent_{s,b}(f)$ for all $0<s<r$.
\end{itemize}
\end{lem}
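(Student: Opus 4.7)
The plan is to handle all three parts by one concatenation construction. Since $x\rightarrow y$, for every $\delta>0$ there is a $\delta$-chain $(z_0,z_1,\dots,z_k)$ of $f$ with $z_0=x$ and $z_k=y$; prepending such a chain to a $\delta$-pseudo orbit (or a genuine orbit) starting sufficiently near $y$ yields a $\delta$-pseudo orbit of $f$ starting at $x$, to which the shadowability of $x$ applies. The time shift by $k$ is absorbed by replacing the shadowing point $w$ with $f^k(w)$.

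For part (1), given $\epsilon>0$, I would use $x\in Sh(f)$ to obtain $\delta>0$ such that every $\delta$-pseudo orbit starting at $x$ is $\epsilon$-shadowed, fix a $\delta$-chain $(z_0,\dots,z_k)$ from $x$ to $y$, and for each $\delta$-pseudo orbit $(y_j)_{j\ge0}$ with $y_0=y$ observe that $(z_0,\dots,z_{k-1},y_0,y_1,\dots)$ is again a $\delta$-pseudo orbit starting at $x$. If $w\in X$ $\epsilon$-shadows it, then $v:=f^k(w)$ $\epsilon$-shadows $(y_j)$, so $y\in Sh(f)$.

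For parts (2) and (3), the strategy is to transport $(n,r)$-separated subsets of a small neighborhood of $y$ to $(n+k,s)$-separated subsets of an arbitrary closed neighborhood $K\in\mathcal{K}(x)$. Given $0<s<r$, set $\epsilon=(r-s)/2$, shrinking if necessary so that $\overline{B(x,\epsilon)}\subset K$; apply shadowability at $x$ to obtain $\delta>0$, and choose a $(\delta/2)$-chain $(z_0,\dots,z_k)$ from $x$ to $y$. Put $L=\overline{B(y,\delta/2)}\in\mathcal{K}(y)$. For each $z\in L$, the sequence $(z_0,\dots,z_{k-1},z,f(z),f^2(z),\dots)$ is a $\delta$-pseudo orbit starting at $x$ (the only non-trivial pseudo-orbit inequality uses $d(f(z_{k-1}),z)\le\delta/2+\delta/2=\delta$), hence is $\epsilon$-shadowed by some $w_z\in\overline{B(x,\epsilon)}\subset K$ satisfying $d(f^{k+j}(w_z),f^j(z))\le\epsilon$ for all $j\ge0$.

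The counting step is then routine: for any $(n,r)$-separated $E\subset L$, the assignment $z\mapsto w_z$ is injective (agreement would force $d(f^j(z),f^j(z'))\le 2\epsilon<r$ for all $j$), and two points $z\ne z'\in E$ with $d(f^i(z),f^i(z'))>r$ for some $0\le i\le n-1$ yield $d(f^{k+i}(w_z),f^{k+i}(w_{z'}))>r-2\epsilon=s$, so $\{w_z:z\in E\}\subset K$ is $(n+k,s)$-separated. Thus $s_{n+k}(f,K,s)\ge s_n(f,L,r)$, and taking $\limsup$ in $n$ (with the prefactor $n/(n+k)\to 1$) gives $h(f,K,s)\ge h(f,L,r)$; invoking $y\in Ent_{r,b}(f)$ gives $h(f,K,s)\ge b$ (proving (3)), and $y\in Ent_r(f)$ gives $h(f,K,s)>0$ (proving (2)), for an arbitrary $K\in\mathcal{K}(x)$. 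The one point needing care, and hence the main obstacle, is matching scales so that concatenations remain genuine $\delta$-pseudo orbits: choosing the chain to be $(\delta/2)$-fine and $L$ of radius $\delta/2$ is what makes the argument close.
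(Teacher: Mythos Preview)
Your proposal is correct. The concatenation construction---prepending a $\delta$-chain from $x$ to $y$ to pseudo-orbits (for~(1)) or to genuine orbits of points near $y$ (for~(2)--(3)), then invoking shadowability at $x$ and shifting by $f^k$---is exactly the right mechanism, and your care with the $\delta/2$ scale at the junction and with the triangle inequality $r-2\epsilon=s$ is what makes it close. One cosmetic remark: when you ``shrink $\epsilon$ if necessary so that $\overline{B(x,\epsilon)}\subset K$'', you should note explicitly that this only improves the separation inequality $r-2\epsilon\ge s$, so nothing else in the argument is disturbed.

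As for comparison with the paper: the paper does not prove this lemma in-text at all; it simply cites \cite{K4} (Lemma~1.2 and Lemma~3.1 there). Your argument is a self-contained version of what that reference presumably contains, and is the standard route to these statements.
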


Let $f\colon X\to X$ be a continuous map. For a closed $f$-invariant subset $S$ of $X$ and $r>0$, we denote by $Sen_r^\ast(f|_S)$ the set of $x\in S$ such that for any $\delta>0$, there are $\delta$-chains $(x_i)_{i=0}^k$, $(y_i)_{i=0}^k$ of $f|_S$ with $x_0=y_0=x$ and $d(x_k,y_k)>r$. For $x\in X$ and $\epsilon>0$, let $B_\epsilon(x)$ denote the closed $\epsilon$-ball centered at $x$: $B_\epsilon(x)=\{y\in X\colon d(x,y)\le\epsilon\}$.

\begin{lem} 
Let $f\colon X\to X$ be a continuous map and let $C\in\mathcal{C}(f)$. For any $x\in C$ and $r>0$, if
\[
x\in Sh(f)\cap Sen_r^\ast(f|_C),
\]
then  $x\in Ent_s(f)$ for all $0<s<r$.
\end{lem}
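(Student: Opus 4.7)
My plan is to use the shadowing property at $x$ together with the chain-branching provided by $x\in Sen_r^\ast(f|_C)$ to build an exponential family of $\delta$-pseudo orbits starting at $x$, shadow them by actual orbits lying in any prescribed closed neighborhood $K$ of $x$, and show these orbits form an $(n,s)$-separated set of size $2^{n/M}$ for some constant $M$, giving $h(f,K,s)\ge\frac{\log 2}{M}>0$.

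Fix $0<s<r$ and a closed neighborhood $K$ of $x$. Choose $\epsilon>0$ so small that $B_\epsilon(x)\subset K$ and $r-2\epsilon>s$. Since $x\in Sh(f)$, pick $\delta>0$ such that every $\delta$-pseudo orbit of $f$ starting at $x$ is $\epsilon$-shadowed by some point of $X$. By $x\in Sen_r^\ast(f|_C)$, there exist $\delta$-chains $(u_i^0)_{i=0}^{k_0}$ and $(u_i^1)_{i=0}^{k_0}$ of $f|_C$ with $u_0^0=u_0^1=x$ and $d(u_{k_0}^0,u_{k_0}^1)>r$; I can arrange equal length $k_0$ by extending the shorter one by one more step of $f|_C$. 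Since $u_{k_0}^0,u_{k_0}^1,x\in C$ and $f|_C$ is chain transitive, I can append $\delta$-chains in $C$ returning from $u_{k_0}^j$ to $x$, producing two $\delta$-chains $\gamma^0,\gamma^1$ of $f$ from $x$ to $x$ of respective lengths $N_0,N_1$, agreeing at time $0$ but separated by more than $r$ at time $k_0$.

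To equalize the loop lengths, let $M=N_0N_1$ and set $\Gamma^0$ to be $\gamma^0$ concatenated $N_1$ times with itself, and $\Gamma^1$ to be $\gamma^1$ concatenated $N_0$ times with itself; both are $\delta$-chains of $f$ from $x$ to $x$ of length $M$, and by construction their points at time $k_0$ differ by more than $r$. For each $n\ge 1$ and $\sigma=(\sigma_1,\dots,\sigma_n)\in\{0,1\}^n$, I form the $\delta$-pseudo orbit $\xi_\sigma=(x_i^\sigma)_{i\ge 0}$ of $f$ obtained by concatenating $\Gamma^{\sigma_1}\Gamma^{\sigma_2}\cdots\Gamma^{\sigma_n}$ and then extending indefinitely by further copies of $\Gamma^0$ (each successive loop begins at $x$, so the concatenation is indeed a $\delta$-pseudo orbit of $f$ starting at $x$). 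By the choice of $\delta$, each $\xi_\sigma$ is $\epsilon$-shadowed by some $y_\sigma\in X$; in particular $d(y_\sigma,x)\le\epsilon$, so $y_\sigma\in B_\epsilon(x)\subset K$.

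Finally, I check $(Mn,s)$-separation. For distinct $\sigma,\tau\in\{0,1\}^n$, let $j$ be the least index with $\sigma_j\ne\tau_j$; then $\xi_\sigma$ and $\xi_\tau$ agree on the first $j-1$ loops and end the $(j-1)$st loop at $x$, so at time $t=(j-1)M+k_0<Mn$ they are located at $u_{k_0}^{\sigma_j}$ and $u_{k_0}^{\tau_j}$ respectively, giving $d(x_t^\sigma,x_t^\tau)>r$ and hence
\[
d(f^t(y_\sigma),f^t(y_\tau))\ge d(x_t^\sigma,x_t^\tau)-2\epsilon>r-2\epsilon>s.
\]
Thus $\{y_\sigma:\sigma\in\{0,1\}^n\}\subset K$ is an $(Mn,s)$-separated set of cardinality $2^n$, so $s_{Mn}(f,K,s)\ge 2^n$ and therefore $h(f,K,s)\ge\frac{\log 2}{M}>0$. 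Since $K$ was an arbitrary closed neighborhood of $x$, this yields $x\in Ent_s(f)$. The only mildly delicate point is the length-equalization in the third paragraph, which forces the constant $M=N_0N_1$ in the final entropy estimate; everything else is bookkeeping on the branching construction.
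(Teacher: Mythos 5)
Your proof is correct and follows essentially the same route as the paper's: two equal-length $\delta$-chain loops based at $x$ that are separated by more than $r$ at a common intermediate time, concatenated according to binary words and then shadowed, yielding $2^n$ points of $B_\epsilon(x)\subset K$ that are $(Mn,s)$-separated. The only differences are cosmetic: the paper equalizes the loop lengths via the symmetric concatenations $\alpha_0\beta_0\alpha_1\beta_1$ and $\alpha_1\beta_1\alpha_0\beta_0$ (both of length $2k+l+m$) rather than by repeating each loop $N_1$ resp.\ $N_0$ times, and the definition of $Sen_r^\ast(f|_C)$ already provides two chains of equal length, so your parenthetical length adjustment is unnecessary.
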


\begin{proof}
We fix $s,\epsilon>0$ such that $s+2\epsilon<r$. By $x\in Sh(f)$, we obtain $\delta>0$ such that every $\delta$-pseudo orbit $(x_i)_{i\ge0}$ of $f$ with $x_0=x$ is $\epsilon$-shadowed by some $y\in X$. Since $x\in Sen_r^\ast(f|_C)$, there is a pair
\[
(\alpha_0,\alpha_1)=((x_i^{(0)})_{i=0}^k,(x_i^{(1)})_{i=0}^k)
\]
of $\delta$-chains of $f|_C$ such that $x_0^{(0)}=x_0^{(1)}=x$ and $d(x_k^{(0)},x_k^{(1)})>r$. Since $x,x_k^{(0)},x_k^{(1)}\in C$, we have $x_k^{(0)}\rightarrow x$ and $x_k^{(1)}\rightarrow x$; therefore, there is a pair
\[
(\beta_0,\beta_1)=((y_i^{(0)})_{i=0}^l,(y_i^{(1)})_{i=0}^m)
\]
of $\delta$-chains of $f$ such that $y_0^{(0)}=x_k^{(0)}$, $y_0^{(1)}=x_k^{(1)}$, and $y_l^{(0)}=y_m^{(1)}=x$. Let $n=2k+l+m$ and let
\[
(\gamma_0,\gamma_1)=(\alpha_0\beta_0\alpha_1\beta_1,\alpha_1\beta_1\alpha_0\beta_0)=((z_i^{(0)})_{i=0}^n,(z_i^{(1)})_{i=0}^n),
\]
a pair of $\delta$-chains of $f$. Note that $z_0^{(0)}=z_0^{(1)}=z_n^{(0)}=z_n^{(1)}=x$ and
\[
d(z_k^{(0)},z_k^{(1)})=d(x_k^{(0)},x_k^{(1)})>r.
\]
For any $N\ge1$ and $u=(u_j)_{j=1}^N\in\{0,1\}^N$, let
\[
\gamma_u=(w_i^{(u)})_{i=0}^{nN}=\gamma_{u_1}\gamma_{u_2}\cdots\gamma_{u_N},
\]
a $\delta$-chain of $f$. Since $w_0^{(u)}=x$, by the choice of $\delta$, we have $x_u\in X$ such that
\[
d(f^i(x_u),w_i^{(u)})\le\epsilon
\]
for all $0\le i\le nN$. Note that
\[
d(x,x_u)=d(w_0^{(u)},x_u)\le\epsilon
\]
and so $x_u\in B_\epsilon(x)$ for all $u\in\{0,1\}^N$. For any $u,v\in\{0,1\}^N$, $u\ne v$ implies $u_j\ne v_j$ and so
\begin{align*}
d(f^{k+(j-1)n}(x_u),f^{k+(j-1)n}(x_v))&\ge d(w_{k+(j-1)n}^{(u)},w_{k+(j-1)n}^{(v)})-2\epsilon \\
&=d(z_k^{(0)},z_k^{(1)})-2\epsilon>r-2\epsilon>s
\end{align*}
for some $1\le j\le N$. It follows that $\{x_u\colon u\in\{0,1\}^N\}$ is an $(nN,s)$-separated subset of $B_\epsilon(x)$ with
\[
|\{x_u\colon u\in\{0,1\}^N\}|=2^N.
\]
Since $N\ge1$ is arbitrary, we obtain
\begin{align*}
h(f,B_\epsilon(x),s)&\ge\limsup_{N\to\infty}\frac{1}{nN}\log{s_{nN}(f,B_\epsilon(x),s)}\\
&\ge\limsup_{N\to\infty}\frac{1}{nN}\log{2^N}=\frac{1}{n}\log{2}>0.
\end{align*}
Since $\epsilon>0$ with $s+2\epsilon<r$ is arbitrary, we conclude that $x\in Ent_s(f)$, completing the proof of the lemma.
\end{proof} 

\begin{lem} 
Let $f\colon X\to X$ be a continuous map and let $C\in\mathcal{C}_{\rm ter}(f)$. For any $x\in C$ and $r>0$, $x\in Sen_r(f)$ implies $x\in Sen_s^\ast(f|_C)$ for all $0<s<r$.
\end{lem}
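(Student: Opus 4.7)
The plan is to convert the $r$-sensitivity at $x$---which supplies points $y,z\in X$ near $x$ whose true orbits eventually spread apart by more than $r$---into a pair of finite chains \emph{inside} $C$ with essentially the same endpoint separation. Chain stability of the terminal component $C$ is the mechanism that keeps the displaced orbits close to $C$, enabling a projection back into $C$.

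Fix $0<s<r$ and $\delta>0$; the goal is to exhibit a pair of $\delta$-chains of $f|_C$ starting at $x$ whose endpoints are more than $s$ apart. Set $\eta=\min\{\delta/4,(r-s)/4\}$ and make three nested parameter choices. First, by uniform continuity of $f$, pick $\mu_1\le\eta$ with $d(a,b)\le\mu_1\Rightarrow d(f(a),f(b))\le\eta$. Next, by chain stability of $C$ applied with parameter $\mu_1$, pick $\delta_0\le\mu_1$ such that every $\delta_0$-chain of $f$ from a point of $C$ has all of its points within $\mu_1$ of $C$ (the defining property of chain stability controls only the endpoint of such a chain, but applying it to every prefix controls every point). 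Finally, invoke uniform continuity a second time to obtain $\mu_2\le\delta_0$ with $d(a,b)\le\mu_2\Rightarrow d(f(a),f(b))\le\delta_0$. Using $x\in Sen_r(f)$ at scale $\mu_2$, select $y,z\in X$ with $d(x,y),d(x,z)\le\mu_2$ and some $k\ge 1$ with $d(f^k(y),f^k(z))>r$ (the case $k=0$ is excluded once $\mu_2<r/2$).

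The two sequences
\[
\xi_0=(x,f(y),\ldots,f^k(y)),\qquad \xi_1=(x,f(z),\ldots,f^k(z))
\]
are then $\delta_0$-chains of $f$ starting in $C$, since the only non-orbit jumps are the opening ones $d(f(x),f(y))$ and $d(f(x),f(z))$, both $\le\delta_0$ by the choice of $\mu_2$. Chain stability now places every point of $\xi_0,\xi_1$ within $\mu_1$ of $C$, so I can pick $x_j,y_j\in C$ with $d(x_j,f^j(y))\le\mu_1$ and $d(y_j,f^j(z))\le\mu_1$ for $1\le j\le k$, and set $x_0=y_0=x$. Two routine triangle-inequality estimates---using the continuity bound on $d(x_j,f^j(y))\le\mu_1$ to see that each consecutive jump in the projected chains is $\le 2\eta\le\delta/2$, and observing $d(x_k,y_k)\ge r-2\mu_1\ge(r+s)/2>s$---then finish the proof. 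The main difficulty I anticipate is the bookkeeping of nested parameters: one must insert $\delta_0$ between two invocations of uniform continuity so that sensitivity produces chains fine enough for chain stability to trigger, and so that chain stability in turn delivers proximity within the continuity tolerance $\mu_1$ needed to verify the $\delta$-chain condition for the projected sequences.
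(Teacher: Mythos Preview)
Your proof is correct and follows essentially the same approach as the paper's: use sensitivity to obtain two $\gamma$-chains of $f$ starting at $x$ with endpoints more than $r$ apart, invoke chain stability of the terminal component $C$ to keep these chains within a small distance of $C$, and project pointwise into $C$ to obtain the desired $\delta$-chains of $f|_C$ with endpoint separation exceeding $s$. The only difference is presentational: the paper compresses the nested choices of $\epsilon$ and $\gamma$ into a single phrase ``if $\epsilon,\gamma>0$ are sufficiently small,'' whereas you spell out the explicit thresholds $\eta,\mu_1,\delta_0,\mu_2$ and verify each triangle-inequality estimate.
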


\begin{proof}
Since $x\in C$ and $C\in\mathcal{C}_{\rm ter}(f)$, for any $\epsilon>0$, there is $\gamma>0$ such that every $\gamma$-chain $(x_i)_{i=0}^k$ of $f$ with $x_0=x$ satisfies
\[
d(x_i,C)\le\epsilon
\]
for all $0\le i\le k$. By $x\in Sen_r(f)$, we obtain a pair
\[
((x_i^{(0)})_{i=0}^k,(x_i^{(1)})_{i=0}^k)
\]
of $\gamma$-chains of $f$ with $x_0^{(0)}=x_0^{(1)}=x$ and $d(x_k^{(0)},x_k^{(1)})>r$. For each $j\in\{0,1\}$, we take a sequence $y_i^{(j)}\in C$, $0\le i\le k$, with $y_0^{(j)}=x_0^{(j)}=x$ and
\[
d(x_i^{(j)},y_i^{(j)})=d(x_i^{(0)},C)\le\epsilon
\]
for all $0<i\le k$. For every $\delta>0$, if $\epsilon,\gamma>0$ are sufficiently small, then
\[
((y_i^{(0)})_{i=0}^k,(y_i^{(1)})_{i=0}^k)
\]
is a pair of $\delta$-chains of $f|_C$ such that $y_0^{(0)}=y_0^{(1)}=x$ and $d(y_k^{(0)},y_k^{(1)})>s$. This implies $x\in Sen_s^\ast(f|_C)$; therefore, the lemma has been proved.
\end{proof}

By using these lemmas, we prove Theorem 1.1.

\begin{proof}[Proof of Theorem 1.1]
(1): We take $\epsilon>0$ such that $s+2\epsilon<r$. Let $y\in\omega(x,f)$. Since $Sen_r(f)$ is a closed $f$-invariant subset of $X$, $x\in Sen_r(f)$ implies $\omega(x,f)\subset Sen_r(f)$. Note that
\[
\omega(x,f)\subset C(x,f).
\]
By $C(x,f)\in\mathcal{C}_{\rm ter}(f)$ and Lemma 2.3, we obtain
\[
y\in Sen_r(f)\cap C(x,f)\subset Sen_{s+2\epsilon}^\ast(f|_{C(x,f)}).
\]
Note that $y\in\omega(x,f)$ implies $x\rightarrow y$. By $x\in Sh(f)$ and Lemma 2.1, we obtain $y\in Sh(f)$ and so
\[
y\in Sh(f)\cap Sen_{s+2\epsilon}^\ast(f|_{C(x,f)}).
\]
This with Lemma 2.2 implies $y\in Ent_{s+\epsilon}(f)$. By $x\rightarrow y$, $x\in Sh(f)$, and Lemma 2.1, we conclude that $x\in Ent_s(f)$.

(2): Due to Lemma 1.1, there are $C\in\mathcal{C}_{\rm ter}(f)$ and $y\in C$ such that $x\rightarrow y$. Let $(\epsilon_j)_{j\ge1}$ be a sequence of numbers with $0<\epsilon_1>\epsilon_2 >\cdots$ and $\lim_{j\to\infty}\epsilon_j=0$. By $x\in Sh(f)$, for each $j\ge1$, we have $\delta_j>0$ such that every $\delta_j$-pseudo orbit $(x_i)_{i\ge0}$ of $f$ with $x_0=x$ is $\epsilon_j$-shadowed by some $z\in X$. Since $x\rightarrow y$, for each $j\ge1$, there is a $\delta_j$-chain $(x_i^{(j)})_{i=0}^{k_j}$ of $f$ with $x_0^{(j)}=x$ and $x_{k_j}^{(j)}=y$. We obtain a sequence $x_j\in X$, $j\ge1$, such that
\[
d(f^i(x_j),x_i^{(j)})\le\epsilon_j
\]
for all $j\ge1$ and $0\le i\le k_j$. Note that
\[
d(x_j,x)=d(x_j,x_0^{(j)})\le\epsilon_j
\]
and
\[
d(f^{k_j}(x_j),y)=d(f^{k_j}(x_j),x_{k_j}^{(j)})\le\epsilon_j
\]
for all $j\ge1$. Since $x\in{\rm int}[Sen_r(f)]$, we have $x_j\in Sen_r(f)$ for all $j$ large enough. Since $Sen_r(f)$ is a closed $f$-invariant subset of $X$, it follows that $f^{k_j}(x_j)\in Sen_r(f)$ for all $j$ large enough; thus, by $\lim_{j\to\infty}f^{k_j}(x_j)=y$, we obtain
\[
y\in Sen_r(f)\cap C.
\]
Note that $C\in\mathcal{C}_{\rm ter}(f)$, $x\rightarrow y$, and $x\in Sh(f)$. The rest of the proof is similar to that of (1). 
\end{proof}

\section{Example}

In this section, we present an example of a homeomorphism $f\colon X\to X$ with the following properties
\begin{itemize}
\item[(1)] $X=Sh(f)$,
\item[(2)] there exists $C\in\mathcal{C}_{\rm ter}(f)$ such that $C$ is an infinite set and satisfies $C\subset E_1(f)\setminus Ent_{up}(f)$.
\end{itemize}

The following example is taken from \cite{K3}. We modify Example 4.3 of \cite{K3} to obtain the homeomorphism.

\begin{ex}
\normalfont
Let $\sigma\colon[-1,1]^\mathbb{Z}\to[-1,1]^\mathbb{Z}$ be the shift map, i.e.,
\[
\sigma(x)_n=x_{n+1}
\]
for all $x=(x_n)_{n\in\mathbb{Z}}$ and $n\in\mathbb{Z}$. Let $d$ be the metric on $[-1,1]^\mathbb{Z}$ defined by
\[
d(x,y)=\sup_{n\in\mathbb{Z}}2^{-|n|}|x_n-y_n|
\]
for all $x=(x_n)_{n\in\mathbb{Z}},y=(y_n)_{n\in\mathbb{Z}}\in[-1,1]^\mathbb{Z}$. We fix a sequence $s=(s_k)_{k\ge 1}$ of numbers with $0<s_1<s_2<\cdots$ and $\lim_{k\to\infty}s_k=1$. Put
\[
S=\{-1,1\}\cup\{-s_k\colon k\ge1\}\cup\{s_k\colon k\ge1\},
\]
a closed subset of $[-1,1]$.

We define a closed $\sigma$-invariant subset $X$ of $S^\mathbb{Z}$ by for any $x=(x_n)_{n\in\mathbb{Z}}\in S^\mathbb{Z}$, $x\in X$ if and only if the following conditions are satisfied
\begin{itemize}
\item $|x_n|\le|x_{n+1}|$ for all $n\in\mathbb{Z}$,
\item for any $n\in\mathbb{Z}$ and $k\ge1$, if $x_n=s_k$, then $x_{n+j}=-s_k$ for all $1\le j\le k$,
\item for every $n\in\mathbb{Z}$, if $x_n=1$, then $x_{n+j}=-1$ for all $j\ge1$.
\end{itemize}
We define $z=(z_n)_{n\in\mathbb{Z}}$ and $z^{(m)}=(z_n^{(m)})_{n\in\mathbb{Z}}$, $m\in\mathbb{Z}$, as follows.
\begin{itemize}
\item $z_n=-1$ for all $n\in\mathbb{Z}$,
\item $z_m^{(m)}=1$ for all $m\in\mathbb{Z}$,
\item $z_n^{(m)}=-1$ for all $m,n\in\mathbb{Z}$ with $m\ne n$.
\end{itemize}
Let $f=\sigma|_X\colon X\to X$, $X_k=X\cap\{-s_k,s_k\}^\mathbb{Z}$, $k\ge1$, and let
\[
X_\infty=X\cap\{-1,1\}^\mathbb{Z}=\{z\}\cup\{z^{(m)}\colon m\in\mathbb{Z}\}.
\]
A similar argument as in Example 4.3 of \cite{K3} shows that
\[
CR(f)=\{x=(x_n)_{n\in\mathbb{Z}}\in X\colon|x_n|=|x_{n+1}|\:\:\text{for all $n\in\mathbb{Z}$}\}=X_\infty\cup\bigcup_{k\ge1}X_k
\]
and $\mathcal{C}(f)=\{X_\infty\}\cup\{X_k\colon k\ge1\}$. Note that $\mathcal{C}_{\rm ter}(f)=\{X_\infty\}$.

Similarly as in Example 4.3 of \cite{K3}, by taking a sequence $s=(s_k)_{k\ge 1}$ that rapidly converges to $1$, we can show that $X=Sh(f)$, i.e., $f$ has the shadowing property. Since $X_\infty\in\mathcal{C}_{\rm ter}(f)$ and $h_{\rm top}(f|_{X_\infty})=0$, by Lemma  
1.4, we obtain
\[
X_\infty\cap Ent_{up}(f)=\emptyset.
\]
Note that the sequence $X_k$, $k\ge1$, satisfies
\[
\lim_{k\to\infty}X_k=X_\infty
\]
with respect to the Hausdorff distance. For every $k\ge1$, since $X_k$ is a mixing subshift of finite type, $f|_{X_k}$ is mixing and satisfies the shadowing property. This implies that for every $k\ge1$, $f|_{X_k}$ has uniformly positive entropy (upe) and so satisfies
\[
(x,y)\in E_2(f)  
\]
for all $x,y\in X_k$ with $x\ne y$ (see, e.g., \cite{K2} for details). Given any $u,v\in X_\infty$ with $u\ne v$, by taking two sequences $(x_k)_{k\ge1}$, $(y_k)_{k\ge1}$ such that
\begin{itemize}
\item $x_k,y_k\in X_k$ and $x_k\ne y_k$ for all $k\ge1$,
\item $\lim_{k\to\infty}x_k=u$ and $\lim_{k\to\infty}y_k=v$,
\end{itemize}
we obtain $(u,v)\in E_2(f)$. It follows that $X_\infty\subset E_1(f)$; thus, this example satisfies the desired properties. 
\end{ex}

\appendix

\section{}

In this Appendix A, we prove the following theorem.
\begin{thm}
Let $f\colon X\to X$ be a continuous map. Let $x\in X$ and $C=C(x,f)$. The following conditions are equivalent:
\begin{itemize}
\item[(A)] $x\in CC(f)$,
\item[(B)] $C\in\mathcal{C}_{\rm ter}(f)$ and $C\subset CC(f)$,
\item[(C)] $C\in\mathcal{C}_{\rm ter}(f)$ and $CC(f|_C)=C$,
\item[(D)] $C\in\mathcal{C}_{\rm ter}(f)$ and $CC(f|_C)\ne\emptyset$.
\end{itemize}
\end{thm}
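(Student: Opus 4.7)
The plan is to prove the cycle (A)$\Rightarrow$(D)$\Rightarrow$(C)$\Rightarrow$(B)$\Rightarrow$(A). The unifying technique is the gluing of pseudo orbits that pass through the chain component $C$, exploiting chain transitivity of $C$ together with the CC property of a carefully chosen base point (either $x$ itself, or a fixed $p\in\omega(x,f)$).

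For (A)$\Rightarrow$(D), I first show $C\in\mathcal{C}_{\rm ter}(f)$. Given $\epsilon>0$, take $\delta>0$ from CC of $x$ at level $\epsilon$. For $y\in C$ and a $\delta$-chain $(y_i)_{i=0}^k$ with $y_0=y$, use $\omega(x,f)\subset C$ together with the chain transitivity of $C$ to manufacture a $\delta$-chain from $x$ to $y$ of arbitrary length $M$; concatenating this with $(y_i)$ and the forward orbit of $y_k$ yields a $\delta$-pseudo orbit of $f$ from $x$, and CC of $x$ gives $d(f^{M+k}(x),y_k)\le\epsilon$. Choosing $M$ large enough that also $d(f^{M+k}(x),C)\le\epsilon$ (possible since $f^n(x)\to\omega(x,f)\subset C$), one concludes $d(y_k,C)\le 2\epsilon$. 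Next I show $\omega(x,f)\subset CC(f|_C)$, which already implies $CC(f|_C)\ne\emptyset$. Given $p\in\omega(x,f)$ and $\epsilon>0$, let $\delta_x$ come from CC of $x$ at level $\epsilon/2$ and pick $n$ with $d(f^n(x),p)\le\delta_x$. For any $\delta_x$-pseudo orbit $(p_i)_{i\ge0}$ of $f|_C$ starting at $p$, both $(x,\ldots,f^{n-1}(x),p_0,p_1,\ldots)$ and $(x,\ldots,f^{n-1}(x),p,f(p),f^2(p),\ldots)$ are $\delta_x$-pseudo orbits of $f$ from $x$; CC of $x$ bounds both by $\epsilon/2$, and a triangle inequality gives $d(f^k(p),p_k)\le\epsilon$.

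For (D)$\Rightarrow$(C), given $y\in CC(f|_C)$ and $z\in C$, take $\delta_y$ from CC of $y$ in $f|_C$ at level $\epsilon/3$ and fix a $\delta_y$-chain from $y$ to $z$ in $f|_C$ of length $M$. For any $\delta_y$-pseudo orbit $(z_i)$ of $f|_C$ from $z$, prepending the chain and comparing CC-shadowing of the resulting pseudo orbit with the analogous pseudo orbit ending in the forward orbit of $z$ yields $d(f^k(z),z_k)\le 2\epsilon/3$, so $z\in CC(f|_C)$ (with parameter $\delta_y$ independent of $z$). For (C)$\Rightarrow$(B), let $z\in C$ and take a small $\delta$-pseudo orbit $(z_i)$ of $f$ from $z$; by the terminal property, the sequence lies in a small neighborhood of $C$, so the pointwise projection $\tilde z_i\in C$ (with $\tilde z_0=z$) is a pseudo orbit of $f|_C$ from $z$ by uniform continuity of $f$, to which CC of $z$ in $f|_C$ applies, and the triangle inequality finishes.

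The hardest step is (B)$\Rightarrow$(A). Fix $p\in\omega(x,f)\subset C\subset CC(f)\subset EC(f)$ and let $\delta_p$ and $\delta_{EC}$ be the CC and equicontinuity parameters of $p$ at level $\epsilon/3$. Pick $n$ with $d(f^n(x),p)\le\min(\delta_p/4,\delta_{EC})$, so that equicontinuity of $p$ already gives $d(f^{n+i}(x),f^i(p))\le\epsilon/3$ for all $i\ge0$. Choose $\delta$ so small that every $\delta$-pseudo orbit from $x$ remains within $\min(\epsilon/3,\delta_p/4)$ of the true orbit of $x$ on $[0,n]$ (iterated uniform continuity of $f$) and such that $(p,x_{n+1},x_{n+2},\ldots)$ is a $\delta_p$-pseudo orbit from $p$. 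CC of $p$ then yields $d(f^i(p),x_{n+i})\le\epsilon/3$ for $i\ge1$, and combining with the equicontinuity estimate gives $d(f^{n+i}(x),x_{n+i})\le 2\epsilon/3$; the continuity estimate handles $i\le n$. The principal obstacle is the consistent bookkeeping of the parameters $\delta$, $\delta_p$, $\delta_{EC}$, and the modulus of continuity of $f^n$; conceptually, the key insight is that a single $\omega$-limit point of $x$ carrying both chain continuity and equicontinuity is sufficient to carry the chain continuity property back to $x$.
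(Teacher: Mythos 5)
Your proof is correct, and the underlying toolbox is the same as the paper's: concatenating chains/pseudo-orbits through $C$, comparing two pseudo-orbits that are both shadowed by a single chain-continuity point, and projecting a pseudo-orbit onto $C$ via chain stability. What differs is the decomposition. The paper proves the cycle (A)$\Rightarrow$(B)$\Rightarrow$(C)$\Rightarrow$(D)$\Rightarrow$(A), where (B)$\Rightarrow$(C)$\Rightarrow$(D) are trivial and all the work sits in two places: (A)$\Rightarrow$(B) (chain continuity propagates along $x\rightarrow y$, and a chain component meeting $CC(f)$ is terminal) and the hardest step (D)$\Rightarrow$(A), which must first transport the single point of $CC(f|_C)$ to a point of $\omega(x,f)$ and then pull chain continuity back to $x$. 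You run the cycle the other way, so your final step (B)$\Rightarrow$(A) starts from the stronger hypothesis $C\subset CC(f)$ and is correspondingly easier (you may take $p\in\omega(x,f)$ already chain continuous, and you substitute the inclusion $CC(f)\subset EC(f)$ for the paper's trick of applying chain continuity of $z$ to the pseudo-orbit $(z,f^{k+1}(x),\dots)$ — these are interchangeable); the price is that you must separately prove (D)$\Rightarrow$(C), i.e.\ that $CC(f|_C)$ is all-or-nothing on the chain transitive system $f|_C$, a statement the paper never isolates and only gets as a byproduct. Your (C)$\Rightarrow$(B) is the paper's Lemma A.3 verbatim, and your (A)$\Rightarrow$(D) combines the paper's Lemmas A.1 and A.2. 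The only loose ends are the ones you flag yourself: the constant-chasing in (B)$\Rightarrow$(A) (you need $d(x_n,p)$ small enough that $d(f(p),f(x_n))\le\delta_p/2$, not merely $\le\delta_p/4$ in the metric $d$) and the harmless $2\epsilon$ in place of $\epsilon$ in the terminality and shadowing estimates; both are routine to repair.
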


For the proof, we need several lemmas.

\begin{lem}
For a continuous map $f\colon X\to X$ and $x,y\in X$, if $x\in CC(f)$ and $x\rightarrow y$, then $y\in CC(f)$.
\end{lem}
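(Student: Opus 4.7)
The plan is to prove the lemma directly from the definition of $CC(f)$, using the chain $x \to y$ to splice pseudo orbits starting at $y$ into pseudo orbits starting at $x$, where the chain continuity hypothesis can be applied.

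Fix $\epsilon > 0$. Using $x \in CC(f)$, I first obtain $\delta > 0$ such that every $\delta$-pseudo orbit $(w_i)_{i \ge 0}$ of $f$ with $w_0 = x$ satisfies $d(f^i(x), w_i) \le \epsilon/2$ for all $i \ge 0$. Then I set $\delta' = \delta/2$ and, using $x \to y$, fix once and for all a $\delta'$-chain $(x_0, x_1, \ldots, x_k)$ of $f$ with $x_0 = x$ and $x_k = y$. The claim will be that this $\delta'$ is the threshold witnessing $y \in CC(f)$ for the given $\epsilon$.

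The key step is to apply $x \in CC(f)$ to two cleverly chosen pseudo orbits both starting at $x$. The first is $\xi^A = (x_0, x_1, \ldots, x_{k-1}, y, f(y), f^2(y), \ldots)$, which is a $\delta$-pseudo orbit because the prefix is a $\delta'$-chain and the tail is a genuine orbit. The second, for any given $\delta'$-pseudo orbit $(y_i)_{i \ge 0}$ with $y_0 = y$, is the concatenation $\xi^B = (x_0, x_1, \ldots, x_{k-1}, y_0, y_1, y_2, \ldots)$; the splice is valid since $d(f(x_{k-1}), y_0) = d(f(x_{k-1}), y) \le \delta' \le \delta$, and each transition in the tail costs at most $\delta' \le \delta$. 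Applying the $\epsilon/2$-shadowing of $x$ to both $\xi^A$ and $\xi^B$ yields $d(f^{k+j}(x), f^j(y)) \le \epsilon/2$ and $d(f^{k+j}(x), y_j) \le \epsilon/2$ for every $j \ge 0$, and the triangle inequality gives $d(f^j(y), y_j) \le \epsilon$ for every $j \ge 0$, which is exactly what is required for $y \in CC(f)$.

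There is no real obstacle here; the only thing to watch is the arithmetic between $\delta$ and $\delta'$, ensuring that the spliced sequence $\xi^B$ remains a $\delta$-pseudo orbit at the junction $i = k-1$ where the prescribed $\delta'$-chain meets the given $\delta'$-pseudo orbit, and that the same $x$ can serve as the common $\epsilon/2$-shadow of $\xi^A$ and $\xi^B$ so that the triangle inequality closes.
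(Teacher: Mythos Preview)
Your proof is correct. Both your argument and the paper's proceed by prepending a chain from $x$ to $y$ to a pseudo orbit starting at $y$, thereby reducing to the chain continuity of $x$; but where you fix a \emph{single} $\delta'$-chain and compare the two spliced pseudo orbits $\xi^A$ and $\xi^B$ via the triangle inequality with an $\epsilon/2$ split, the paper instead takes a \emph{sequence} of $\delta_j$-chains with $\delta_j\to 0$, deduces from chain continuity that $f^{k_j}(x)\to y$, and then passes to the limit in the single shadowing estimate $\sup_{i\ge0}d(f^i(f^{k_j}(x)),y_i)\le\epsilon$. Your route is the more elementary one, avoiding the limit argument entirely; the paper's version has the incidental by-product of making the convergence $f^{k_j}(x)\to y$ explicit, though this plays no further role here. (As a minor remark, your choice $\delta'=\delta/2$ is harmless but unnecessary: $\delta'=\delta$ already makes both $\xi^A$ and $\xi^B$ into $\delta$-pseudo orbits, since the splice at index $k-1$ reuses the chain's own jump $d(f(x_{k-1}),y)\le\delta'$.)
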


\begin{proof}
Let $\epsilon>0$. By $x\in CC(f)$, we have $\delta>0$ such that every $\delta$-pseudo orbit $(x_i)_{i\ge0}$ of $f$ with $x_0=x$ is $\epsilon$-shadowed by $x$. Let $\xi=(y_i)_{i\ge0}$ be a $\delta$-pseudo orbit of $f$ with $y_0=y$. We fix a sequence $(\delta_j)_{j\ge1}$ of numbers with $\delta\ge\delta_1>\delta_2>\cdots$ and $\lim_{j\to\infty}\delta_j=0$. Since $x\rightarrow y$, for every $j\ge1$, there is a $\delta_j$-chain $\alpha_j=(x_i^{(j)})_{i=0}^{k_j}$ of $f$ with $x_0^{(j)}=x$ and $x_{k_j}^{(j)}=y$. By $x\in CC(f)$ and $\lim_{j\to\infty}\delta_j=0$, we obtain $\lim_{j\to\infty}f^{k_j}(x)=y$. For each $j\ge1$, let
\[
\xi_j=(y_i^{(j)})_{i\ge0}=\alpha_j\xi,
\]
a $\delta$-pseudo orbit of $f$. Given any $j\ge1$, since $y_0^{(j)}=x_0^{(j)}=x$, we have
\[
\sup_{i\ge0}d(f^i(x),y_i^{(j)})\le\epsilon
\]
which implies
\[
\sup_{i\ge0}d(f^i(f^{k_j}(x)),y_i)=\sup_{i\ge0}d(f^{i+k_j}(x),y_{i+k_j}^{(j)})\le\epsilon.
\]
Letting $j\to\infty$, we obtain
\[
\sup_{i\ge0}d(f^i(y),y_i)\le\epsilon.
\]
Since $\xi$ is arbitrary, we conclude that $y\in CC(f)$, proving the lemma.
\end{proof}

\begin{lem}
For a continuous map  $f\colon X\to X$ and $C\in\mathcal{C}(f)$, if $C\cap CC(f)\ne\emptyset$, then $C\in\mathcal{C}_{\rm ter}(f)$.
\end{lem}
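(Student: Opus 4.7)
The plan is to argue by contradiction. Fix $x\in C\cap CC(f)$ and suppose $C$ is not chain stable: then there exist $\epsilon>0$, a sequence $\delta_n\to 0^+$, and $\delta_n$-chains $\alpha_n=(x_i^{(n)})_{i=0}^{k_n}$ of $f$ with $x_0^{(n)}\in C$ and $d(x_{k_n}^{(n)},C)>\epsilon$ for all $n\ge 1$. Using that $x\in CC(f)$, I would choose $\delta>0$ such that every $\delta$-pseudo orbit $(y_i)_{i\ge 0}$ of $f$ with $y_0=x$ is $(\epsilon/2)$-shadowed by $x$, and then fix any $n$ with $\delta_n\le\delta$.

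The key construction is to build, from $\alpha_n$, a $\delta$-pseudo orbit of $f$ starting at $x$ that passes through $x_{k_n}^{(n)}$. Since $x,x_0^{(n)}\in C$, we have $x\rightarrow x_0^{(n)}$, so there is a $\delta$-chain $\beta_n=(u_j)_{j=0}^{l_n}$ of $f$ with $u_0=x$ and $u_{l_n}=x_0^{(n)}$. I would concatenate $\beta_n$ with $\alpha_n$ and then append the true forward orbit $f^j(x_{k_n}^{(n)})$, $j\ge 0$. The resulting sequence $(y_i)_{i\ge 0}$ satisfies $y_0=x$ and $y_{l_n+k_n}=x_{k_n}^{(n)}$, and inspection of each junction (the interior of $\beta_n$ and of $\alpha_n$ by construction, the gluing at $u_{l_n}=x_0^{(n)}$ trivially, and the passage into the orbit tail with error $0$) shows that it is indeed a $\delta$-pseudo orbit.

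The contradiction then comes almost for free. By the choice of $\delta$ we have $d(f^{l_n+k_n}(x),x_{k_n}^{(n)})\le\epsilon/2$, and since $C$ is a closed $f$-invariant subset of $X$ containing $x$, the point $f^{l_n+k_n}(x)$ lies in $C$, giving $d(x_{k_n}^{(n)},C)\le\epsilon/2<\epsilon$, contradicting the choice of $\alpha_n$. Hence $C$ is chain stable, i.e.\ $C\in\mathcal{C}_{\rm ter}(f)$. The only step that requires care is the bookkeeping to verify that the concatenation really is a $\delta$-pseudo orbit, but this is routine provided $\delta\ge\delta_n$; I do not expect any substantive obstacle beyond this.
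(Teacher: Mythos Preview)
Your argument is correct. The core mechanism is the same as the paper's: chain continuity at $x$ forces the endpoint of any $\delta$-chain beginning at $x$ to lie within $\epsilon$ of the forward orbit of $x$, and hence within $\epsilon$ of $C$ (since $C$ is closed and $f$-invariant).

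The packaging differs slightly. The paper does not argue by contradiction with the $\epsilon$--$\delta$ definition of chain stability; instead it verifies the equivalent criterion that $y\in C$ and $y\rightarrow z$ imply $z\in C$. Concretely: given such $y,z$, transitivity gives $x\rightarrow z$, and then chain continuity at $x$ (applied to $\delta$-chains from $x$ to $z$ with $\delta\to 0$) places $z$ in the orbit closure of $x$, hence in $C$. Your version unwinds exactly this implication at a fixed scale $\delta$ and a single witness chain, which makes the argument more explicit but is not a genuinely different route.
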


\begin{proof}
Let $x\in C\cap CC(f)$. Let $y\in C$, $z\in X$ and $y\rightarrow z$. By $x,y\in C$, we have $x\rightarrow y$. By $x\rightarrow y$ and $y\rightarrow z$, we obtain $x\rightarrow z$. Since $x\in CC(f)$, this implies $z\in C$; thus, we conclude that $C\in\mathcal{C}_{\rm ter}(f)$.
\end{proof}

\begin{lem}
Let $f\colon X\to X$ be a continuous map and let $C\in\mathcal{C}(f)$. If $C\in\mathcal{C}_{\rm ter}(f)$, then $CC(f|_C)\subset CC(f)$.
\end{lem}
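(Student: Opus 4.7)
The plan is to show that if $x \in CC(f|_C)$ (so in particular $x \in C$), then $x \in CC(f)$, by converting a $\delta$-pseudo orbit of $f$ starting at $x$ into a nearby pseudo orbit of $f|_C$ starting at $x$ via projection onto $C$, and then invoking chain continuity with respect to $f|_C$.

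Fix $\epsilon > 0$. First I would use $x \in CC(f|_C)$ to choose $\eta > 0$ so that every $\eta$-pseudo orbit $(y_i)_{i \ge 0}$ of $f|_C$ with $y_0 = x$ satisfies $d(f^i(x), y_i) \le \epsilon/2$ for all $i \ge 0$. Next, by uniform continuity of $f$ on $X$, choose $\eta' > 0$ so that $d(u,v) \le \eta'$ implies $d(f(u), f(v)) \le \eta/3$. Then, using the assumption $C \in \mathcal{C}_{\rm ter}(f)$ (chain stability of $C$), pick $\delta_0 > 0$ such that every $\delta_0$-chain of $f$ starting in $C$ has endpoint within $\min\{\eta', \eta/3, \epsilon/2\}$ of $C$; call this common small radius $\epsilon_0$. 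Finally set $\delta = \min\{\delta_0, \eta/3\}$.

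Now let $(x_i)_{i \ge 0}$ be any $\delta$-pseudo orbit of $f$ with $x_0 = x$. Applying chain stability to each truncation $(x_0, x_1, \ldots, x_i)$, a $\delta_0$-chain of $f$ starting at $x \in C$, I obtain $d(x_i, C) \le \epsilon_0$ for all $i \ge 0$. Select $y_i \in C$ with $y_0 = x$ and $d(x_i, y_i) \le \epsilon_0$ for $i \ge 1$. Since $y_i \in C$ and $C$ is $f$-invariant, $f(y_i) \in C$, so the triangle inequality
\[
d(f(y_i), y_{i+1}) \le d(f(y_i), f(x_i)) + d(f(x_i), x_{i+1}) + d(x_{i+1}, y_{i+1}) \le \eta/3 + \eta/3 + \eta/3 = \eta
\]
shows that $(y_i)_{i \ge 0}$ is an $\eta$-pseudo orbit of $f|_C$ with $y_0 = x$. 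By the choice of $\eta$ we get $d(f^i(x), y_i) \le \epsilon/2$, and combining with $d(y_i, x_i) \le \epsilon_0 \le \epsilon/2$ yields $d(f^i(x), x_i) \le \epsilon$ for all $i \ge 0$, as required.

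The only subtle point is making sure the projected sequence $(y_i)$ is genuinely a pseudo orbit with respect to $f|_C$ rather than just of $f$; this is why the radii $\eta'$, $\epsilon_0$, and $\delta$ must all be chosen small simultaneously to absorb the three-term triangle inequality above. Everything else is a routine application of chain stability plus the chain continuity hypothesis on $f|_C$.
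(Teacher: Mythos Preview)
Your proof is correct and follows essentially the same route as the paper's: use chain stability of the terminal component $C$ to keep any $f$-pseudo orbit starting at $x$ close to $C$, project it onto $C$, verify the projected sequence is a pseudo orbit of $f|_C$, and then invoke chain continuity of $f|_C$ together with the triangle inequality. The only difference is cosmetic: you make the constants explicit (uniform continuity, the $\eta/3$ splitting), whereas the paper simply writes ``if $\beta,\gamma>0$ are sufficiently small'' and ends with a $2\epsilon$ bound instead of $\epsilon$.
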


\begin{proof}
Let $x\in CC(f|_C)$ and $\epsilon>0$. By $x\in CC(f|_C)$, we have $\delta>0$ such that every $\delta$-pseudo orbit $(x_i)_{i\ge0}$ of $f|_C$ with $x_0=x$ is $\epsilon$-shadowed by $x$. Let $0<\beta\le\epsilon$. Since $C\in\mathcal{C}_{\rm ter}(f)$ and $x\in C$, there is $\gamma>0$ such that every $\gamma$-pseudo orbit $\xi=(y_i)_{i\ge0}$ of $f$ with $y_0=x$ satisfies
\[
\sup_{i\ge0}d(y_i,C)\le\beta.
\]
If $\beta,\gamma>0$ are sufficiently small, then by letting $x_0=x$ and taking $x_i\in C$, $i>0$, such that
\[
d(y_i,x_i)=d(y_i,C)\le\beta
\]
for all $i>0$, we obtain a $\delta$-pseudo orbit $\xi'=(x_i)_{i\ge0}$ of $f|_C$ with $x_0=x$.  It follows that
\[
\sup_{i\ge0}d(f^i(x),x_i)\le\epsilon
\]
and so
\[
\sup_{i\ge0}d(f^i(x),y_i)\le\epsilon+\beta\le2\epsilon.
\] 
Since $\xi$ is arbitrary, we obtain $x\in CC(f)$ and thus $CC(f|_C)\subset CC(f)$, proving the lemma.
\end{proof}

\begin{lem}
Let $f\colon X\to X$ be a continuous map. Let $x\in X$ and let $C=C(x,f)$. If $C\in\mathcal{C}_{\rm ter}(f)$ and $CC(f|_C)\ne\emptyset$, then $x\in CC(f)$.
\end{lem}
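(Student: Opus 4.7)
The plan is to first derive $C\subset CC(f)$ from the given data, then upgrade chain continuity at a chosen $y\in\omega(x,f)$ to equicontinuity at $y$, and finally stitch a finite-time approximation on $[0,N]$ to an asymptotic shadowing after time $N$.

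Since $f|_C$ is chain transitive and $CC(f|_C)\ne\emptyset$, Lemma A.1 applied within $C$ yields $CC(f|_C)=C$. Combined with $C\in\mathcal{C}_{\rm ter}(f)$, Lemma A.3 gives $C\subset CC(f)$. Pick any $y\in\omega(x,f)\subset C$; then $y\in CC(f)$. Next, I will record the standard implication $CC(f)\subset EC(f)$: for $w\in X$ with $d(y,w)$ small, the sequence $(y,f(w),f^2(w),\ldots)$ is a pseudo-orbit from $y$ whose step size is controlled by the modulus of continuity of $f$ at $d(y,w)$, so by $y\in CC(f)$ it is $\epsilon$-shadowed by $y$, which forces $\sup_{i\ge 1}d(f^i(y),f^i(w))\le\epsilon$ and hence $y\in EC(f)$.

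Given $\epsilon>0$, let $\delta_1>0$ be the chain-continuity constant for $y$ at scale $\epsilon/3$ and $\delta_2>0$ the equicontinuity constant at $y$ for $\epsilon/3$. Using $y\in\omega(x,f)$, pick $N$ with $d(f^N(x),y)\le\min(\delta_2/2,\delta_1/4)$. Choose $\delta>0$ small enough (via uniform continuity of $f,f^2,\ldots,f^N$) that every $\delta$-pseudo-orbit $(x_i)_{i\ge 0}$ from $x$ satisfies $d(f^i(x),x_i)\le\min(\epsilon/3,\delta_2/2,\delta_1/4)$ for $0\le i\le N$, and so that the initial step of $(y,x_{N+1},x_{N+2},\ldots)$ is controlled below $\delta_1$ (using continuity of $f$ at $x_N$, which is close to $y$). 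For $i\le N$ the required bound $d(f^i(x),x_i)\le\epsilon$ is then immediate. For $i>N$ the sequence $(y,x_{N+1},x_{N+2},\ldots)$ is a $\delta_1$-pseudo-orbit from $y$, giving $d(f^{i-N}(y),x_i)\le\epsilon/3$; meanwhile, equicontinuity at $y$ applied to $w=f^N(x)$ (within $\delta_2$ of $y$) gives $d(f^i(x),f^{i-N}(y))=d(f^{i-N}(f^N(x)),f^{i-N}(y))\le\epsilon/3$. Adding these bounds closes the estimate $d(f^i(x),x_i)\le 2\epsilon/3\le\epsilon$, so $x\in CC(f)$.

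The main obstacle lies at the tail $i>N$: chain continuity at $y$ only controls the deviation of $x_i$ from $f^{i-N}(y)$, not from $f^i(x)$, and the two orbits of $f^N(x)$ and of $y$ could a priori drift arbitrarily apart as $i\to\infty$. The upgrade $CC(f)\subset EC(f)$ is the crucial ingredient that prevents this drift: once $f^N(x)$ is within $\delta_2$ of $y$, the orbits of $f^N(x)$ and $y$ remain $\epsilon/3$-close for \emph{all} subsequent times, which is precisely what makes the triangle inequality work out uniformly in $i$.
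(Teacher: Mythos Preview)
Your argument is correct and follows essentially the same route as the paper: both locate a chain-continuous point in $\omega(x,f)$, control the first $N$ steps by finite uniform continuity, and handle the tail by comparing both the pseudo-orbit and the true orbit of $x$ to the orbit of that point; your detour through $EC(f)$ is precisely the paper's application of chain continuity to the degenerate pseudo-orbit $(z,f^{k+1}(x),f^{k+2}(x),\dots)$, just repackaged. One minor bookkeeping slip: the bound $d(f^N(x),y)\le\delta_1/4$ does not by itself guarantee $d(f(y),f(x_N))<\delta_1$ (the map $f$ may expand), so you should first fix a uniform-continuity threshold $\eta$ for $f$ at scale $\delta_1/2$ and then choose $N$ with $d(f^N(x),y)\le\min(\delta_2,\eta/2)$.
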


\begin{proof}
Let $y\in CC(f|_C)$ and $z\in\omega(x,f)$. By $C\in\mathcal{C}_{\rm ter}(f)$ and Lemma A.3, we obtain $y\in CC(f)$. Since $\omega(x,f)\subset C$, we have $y,z\in C$ and so $y\rightarrow z$. By Lemma A.1, we obtain $z\in CC(f)$. Then, for any $\epsilon>0$, there is $\delta>0$ such that every $\delta$-pseudo orbit $(x_i)_{i\ge0}$ of $f$ with $x_0=z$ is $\epsilon$-shadowed by $z$. For $0<\beta\le2\epsilon$, we fix $k>0$ with $d(z,f^k(x))\le\beta$. By continuity of $f$, we have $\gamma>0$ such that every $\gamma$-pseudo orbit $\xi=(y_i)_{i\ge0}$ of $f$ with $y_0=x$ satisfies
\[
\sup_{0\le i\le k}d(f^i(x),y_i)\le\beta
\] 
and so
\[
d(z,y_k)\le d(z,f^k(x))+d(f^k(x),y_k)\le2\beta.
\]
If $\beta,\gamma>0$ are sufficiently small, then
\[
(z,f^{k+1}(x),f^{k+2}(x),\dots)
\]
and
\[
(z,y_{k+1},y_{k+2},\dots)
\]
are $\delta$-pseudo orbits of $f$ and so $\epsilon$-shadowed by $z$, which implies
\[
\sup_{i\ge k+1}d(f^i(x),y_i)\le2\epsilon.
\]
Note that
\[
\sup_{0\le i\le k}d(f^i(x),y_i)\le\beta\le2\epsilon.
\] 
Since $\xi$ is arbitrary, we conclude that $x\in CC(f)$, completing the proof.
\end{proof}

By these lemmas, we prove Theorem A.1.

\begin{proof}[Proof of Theorem A.1]
The implication (A)$\implies$(B) is a consequence of Lemma A.1 and Lemma A.2. (B)$\implies$(C) and (C)$\implies$(D) are obvious by definition. (D)$\implies$(A) is a consequence of Lemma A.4. This completes the proof of the theorem.
\end{proof}

\section{}
 
The aim of this Appendix B is to prove the following theorem.

\begin{thm}
Every continuous map $f\colon X\to X$ satisfies $Sh(f)\cap CR(f)\subset Sh(f|_{CR(f)})$.
\end{thm}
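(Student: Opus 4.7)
Fix $x\in Sh(f)\cap CR(f)$ and $\epsilon>0$. The plan is to use shadowability of $x$ in $X$ to choose $\delta>0$ such that every $\delta$-pseudo orbit of $f$ with initial point $x$ is $\epsilon/2$-shadowed by some point of $X$, and then to show that this same $\delta$ works for $f|_{CR(f)}$. Given any $\delta$-pseudo orbit $(x_i)_{i\ge 0}$ of $f|_{CR(f)}$ with $x_0=x$, we regard it as a $\delta$-pseudo orbit of $f$ in $X$ and obtain a shadowing point $y\in X$ with $d(f^i(y),x_i)\le\epsilon/2$ for all $i\ge 0$. The real task is to replace $y$ by a point $y^{\ast}\in CR(f)$ still satisfying $d(f^i(y^{\ast}),x_i)\le\epsilon$ for all $i\ge 0$.

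To produce $y^{\ast}$, I would use a finite-segment compactness argument. For each $n\ge 1$, the plan is to construct a point $y_n\in CR(f)$ that $\epsilon$-shadows the initial segment $(x_0,\ldots,x_n)$, i.e.\ $d(f^i(y_n),x_i)\le\epsilon$ for $0\le i\le n$. Then, since $CR(f)$ is compact, a convergent subsequence $y_{n_k}\to y^{\ast}\in CR(f)$ together with continuity of each $f^i$ will give $d(f^i(y^{\ast}),x_i)\le\epsilon$ for all $i\ge 0$, finishing the proof.

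For the construction of $y_n$, I would extend $(x_0,\ldots,x_n)$ to an infinite $\delta$-pseudo orbit $\xi^{(n)}$ of $f|_{CR(f)}$ starting at $x$, where $\xi^{(n)}$ eventually cycles through a closed $\delta$-chain within the chain component $C_n\in\mathcal{C}(f)$ containing $x_n$ (such a loop exists by chain transitivity of $f|_{C_n}$). Applying shadowability of $x$ in $X$ to $\xi^{(n)}$ yields $\tilde y_n\in X$ with $d(f^i(\tilde y_n),\xi^{(n)}(i))\le\epsilon/2$ for all $i$; in particular $d(f^i(\tilde y_n),x_i)\le\epsilon/2$ for $0\le i\le n$. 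Since $\xi^{(n)}$ is eventually periodic within $C_n\subset CR(f)$, the forward orbit of $\tilde y_n$ accumulates near this periodic loop, and the candidate $y_n$ is extracted from $\omega(\tilde y_n,f)\subset CR(f)$ along a carefully chosen subsequence of times synchronized with the period of the extension.

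The main obstacle is this last extraction: making sure $y_n$ $\epsilon$-shadows the initial segment $(x_0,\ldots,x_n)$ rather than only the asymptotic loop in $C_n$. If $x_n$ happens to lie in the same chain component as $x$, one can close the loop by adjoining a $\delta$-chain from $x_n$ back to $x$, making $\xi^{(n)}$ genuinely periodic; then picking $y_n\in\omega(\tilde y_n,f)$ as a limit of $f^{m_k}(\tilde y_n)$ with $m_k$ running through multiples of the period produces a point of $CR(f)$ whose orbit aligns with the block $(x_0,\ldots,x_n)$, yielding the required shadowing. In the general case, where the pseudo orbit $(x_i)$ may jump to downstream chain components and $x_n\to x$ fails, closing the loop back to $x$ is not possible, and I expect the argument will need to combine Lemma 1.1 (to identify a terminal chain component controlling the asymptotic behavior) with Lemma 2.1 (to propagate shadowability along chains) and a refined choice of the extension $\xi^{(n)}$; this is where I anticipate the crux of the proof lies.
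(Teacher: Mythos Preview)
Your overall architecture matches the paper's: for each finite prefix $(x_0,\dots,x_n)$, close it up to a periodic $\delta$-pseudo orbit, shadow it, pass to an $\omega$-limit point (which lies in $CR(f)$ and still shadows the periodic pseudo orbit, hence the prefix), and then finish with a compactness argument over $n$. Your ``special case''---adjoin a $\delta$-chain from $x_n$ back to $x_0$, shadow the resulting periodic block, and extract $y_n$ from $\omega(\tilde y_n,f)$ along multiples of the period---is exactly the mechanism the paper uses.

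The gap is your belief that this special case requires $x_n$ and $x_0$ to lie in the same chain component. It does not. The hypothesis is that \emph{every} $x_i$ lies in $CR(f)$, so each $x_i$ admits a $\delta$-loop through itself. The paper's key observation is that the fixed-$\delta$ relation $\leftrightarrow_\delta$ on $CR(f)$ is strictly coarser than the chain-component relation $\leftrightarrow$: whenever $p,q\in CR(f)$ satisfy $d(p,q)\le\delta$, one already has $p\leftrightarrow_\delta q$ (splice a $\delta$-loop at $p$ onto a $\delta$-loop at $q$; at worst this costs a harmless constant factor in $\delta$, absorbed by choosing the shadowing constant smaller). Applying this step by step along the pseudo orbit, using $f(x_i),x_{i+1}\in CR(f)$ and $d(f(x_i),x_{i+1})\le\delta$, yields $x_0\leftrightarrow_\delta x_k$ for every $k$, so a $\delta$-chain from $x_k$ back to $x_0$ \emph{always} exists. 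Your ``general case'' therefore never arises, and Lemmas~1.1 and~2.1 play no role here.

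Note also that your proposed fallback---looping inside the chain component $C_n$ of $x_n$ to obtain an \emph{eventually} periodic extension---cannot be salvaged as stated: passing to $\omega(\tilde y_n,f)$ records only the tail, so you would shadow the loop in $C_n$ but lose the initial block $(x_0,\dots,x_n)$. A genuinely periodic extension through $x_0$ is essential, and as explained above it is always available.
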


Let $f\colon X\to X$ be a continuous map. For $x,y\in X$ and $\delta>0$, the notation $x\rightarrow_\delta y$ means that there is a $\delta$-chain $(x_i)_{i=0}^k$ of $f$ with $x_0=x$ and $x_k=y$. We define an equivalence relation $\leftrightarrow_\delta$ in
\[
CR(f)^2=CR(f)\times CR(f)
\]
by: for any $x,y\in CR(f)$, $x\leftrightarrow_\delta y$ if and only if $x\rightarrow_\delta y$ and $y\rightarrow_\delta x$. It is easy to see that $x\leftrightarrow_\delta f(x)$ for all $x\in CR(f)$. It is also not difficult to see that $x\leftrightarrow_\delta y$ for all $x,y\in CR(f)$ with $d(x,y)\le\delta$.

\begin{proof}[Proof of Theorem B.1]
Let $x\in Sh(f)\cap CR(f)$. Then, for any $\epsilon>0$, we have $\delta>0$ such that every $\delta$-pseudo orbit $(x_i)_{i\ge0}$ of $f$ with $x_0=x$ is $\epsilon$-shadowed by some $y\in X$. Let $\xi=(x_i)_{i\ge0}$ be a $\delta$-pseudo orbit of $f|_{CR(f)}$ with $x_0=x$. For every $i\ge0$, since $d(f(x_i),x_{i+1})\le\delta$, we have $x_i\leftrightarrow_\delta f(x_i)$ and $f(x_i)\leftrightarrow_\delta x_{i+1}$; therefore, $x_i\leftrightarrow_\delta x_{i+1}$. This implies that $x_0\leftrightarrow_\delta x_k$ for all $k>0$. Then, for any $k>0$, there is a $\delta$-chain $(y_i)_{i=0}^l$ of $f$ with $y_0=x_k$ and $y_l=x_0$. Since
\[
\xi_k=(x_0,x_1,\dots,x_{k-1},y_0,y_1,\dots,y_{l-1},x_0,x_1,\dots,x_{k-1},y_0,y_1,\dots,y_{l-1},\dots)
\]
is a $\delta$-pseudo orbit of $f$ with $x_0=x$, letting
\[
X_k=\{x\in X\colon\text{$\xi_k$ is $\epsilon$-shadowed by $x$}\},
\]
we have $X_k\ne\emptyset$. Note that $X_k$ is a closed $f^{k+l}$-invariant subset of $X$. By taking $u\in X_k$ and $v\in\omega(u,f^{k+l})$, we obtain $v\in X_k\cap CR(f^{k+l})\subset X_k\cap CR(f)$ and thus
\[
CR(f)\cap\bigcap_{i=0}^{k-1}f^{-i}(B_\epsilon(x_i))\ne\emptyset.
\]
It follows that
\[
CR(f)\cap\bigcap_{i=0}^\infty f^{-i}(B_\epsilon(x_i))=\bigcap_{k>0}\left(CR(f)\cap\bigcap_{i=0}^{k-1}f^{-i}(B_\epsilon(x_i))\right)\ne\emptyset,
\]
i.e., $\xi$ is $\epsilon$-shadowed by some $y\in CR(f)$. Since $\xi$ is arbitrary, we obtain $x\in Sh(f|_{CR(f)})$, proving the theorem.
\end{proof}

\end{document}